\makeatletter \@namedef{subjclassname@2010}{%
  \textup{2010} Mathematics Subject Classification}
\newtheorem{Theorem}{Theorem}[section]
\newtheorem{Lemma}[Theorem]{Lemma}
\newtheorem{Corollary}[Theorem]{Corollary}
\newtheorem*{UnnumberedCorollary}{Corollary}
\newtheorem{Claim}[Theorem]{Claim}
	\newcommand{\RR}[0]{\mathbb R}
\newcommand{\FF}[0]{\mathbb F}
	\newcommand{\ZZ}[0]{\mathbb Z}
	\newcommand{\cP}[0]{\mathcal P}
\newcommand{\cF}[0]{\mathcal F}
\newcommand{\cI}[0]{\mathcal I}	
\newcommand{\cJ}[0]{\mathcal J}
\newcommand{\pp}[0]{\textbf{\textit{p}}}
\newcommand{\qq}[0]{\textbf{\textit{q}}}
\newcommand{\vv}[0]{\textbf{\textit{v}}}
\newcommand{\jj}[0]{\textbf{\textit{j}}}
\newcommand{\llll}[0]{\textbf{\textit{l}}}
\newcommand{\mm}[0]{\textbf{\textit{m}}}\newcommand{\zz}[0]{\textbf{\textit{z}}}
\renewcommand{\bar}[1]{\overline{#1}}
\newcommand{\lr}[1]{\left(#1\right)}
\begin{document}


\baselineskip=17pt



\title{On iterated product sets with shifts}

\author[B. Hanson]{Brandon Hanson} \address{Pennsylvania State University\\
University Park, PA, USA}
\email{bwh5339@psu.edu}
\author[O. Roche-Newton]{Oliver Roche-Newton} \address{Johann Radon Institute for Computational and Applied Mathematics\\
Linz, Austria}
\email{o.rochenewton@gmail.com}

\author[D. Zhelezov]{Dmitrii Zhelezov} \address{Alfr\'{e}d R\'{e}nyi Institute of Mathematics \\ 
Hungarian Academy of Sciences, Budapest, Hungary }
\email{dzhelezov@gmail.com}
\date{}

\begin{abstract} We prove that, for any finite set $A \subset \mathbb Q$ with $|AA| \leq K|A|$ and any positive integer $k$, the $k$-fold product set of the shift $A+1$ satisfies the bound
$$| \{(a_1+1)(a_2+1) \cdots (a_k+1) : a_i \in A \}| \geq \frac{|A|^k}{(8k^4)^{kK}}. $$
This result is essentially optimal when $K$ is of the order $c\log|A|$, for a sufficiently small constant $c=c(k)$. 

Our main tool is a multiplicative variant of the $\Lambda$-constants used in harmonic analysis, applied to Dirichlet polynomials.

\end{abstract}

\maketitle

\section{Introduction}

Let $A$ be a finite set of integers. The \textit{sum set} and \textit{product set} of $A$ are defined respectively as
$$A+A:= \{a+b : a,b \in A\},\,\,\,\,\,\, AA:=\{ab :a,b \in A\}.$$
The sum-product problem is concerned with showing that one of these sets is always large. It was conjectured by Erd\H{o}s and Szemer\'{e}di \cite{ES} that, for all $\epsilon>0$ and any finite $A \subset \mathbb Z$,
\begin{equation}
\max \{|A+A|,|AA|\} \geq c(\epsilon)|A|^{2-\epsilon}
\label{ESconj}
\end{equation}
where $c(\epsilon)>0$ is an absolute constant. The same conjecture can also be made over the reals, and indeed other fields. The  Erd\H{o}s--Szemer\'{e}di conjecture remains open, and it appears to be a deep problem. Konyagin and Shkredov \cite{KS} proved that \eqref{ESconj} holds with $\epsilon <2/3$, and the current best bound, due to Rudnev, Shkredov and Stevens \cite{RSS}, has $\epsilon \leq 2/3 -1/1509 +o(1)$. These bounds hold over real numbers, and their proofs are geometric in nature.

A similar question can also be considered with more variables. The \textit{$k$-fold sumset} and \textit{$k$-fold product set} of $A \subset \mathbb Z$ are defined respectively as
$$kA:= \{a_1+\dots +a_k : a_1,\dots,a_k \in A\},\,\,\,\,\,\, A^{(k)}:=\{a_1\cdots a_k :a_1\dots,a_k \in A\}.$$
Erd\H{o}s and Szemer\'{e}di also made the even more general conjecture that, for all $\epsilon>0$ and any finite $A \subset \mathbb Z$,
\begin{equation}
\max \{|kA|,|A^{(k)}|\} \geq c(\epsilon)|A|^{k-\epsilon}.
\label{ESconjk}
\end{equation}
Given the state of progress with the $k=2$ version of this conjecture, it is not surprising that the more general conjecture is also wide open. However, over the rationals, a series of remarkable results concerning unlimited growth does exist. The first of these results is the following theorem of Chang \cite{chang2003erdHos}.

\begin{Theorem} \label{thm:Chang} Let $A \subset \mathbb Q$ be a finite set with $|AA| \leq K|A|$ and let $k \geq 2$ be an integer. Then
\begin{equation}
|kA| \geq \frac{|A|^k}{(2k^2-k)^{kK}}.
\label{Changbound}
\end{equation}
\end{Theorem}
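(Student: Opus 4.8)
\emph{Plan of proof.} The idea is to transport the problem to additive combinatorics in $\ZZ^d$ via the prime factorisation map, bound the relevant dimension by $K$, and then run an induction on that dimension.

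\emph{Set-up.} If $|A| \le (2k^2-k)^K$ the claimed bound is at most $1$, hence trivial, so we may assume $|A|$ is arbitrarily large. Clearing denominators and, if necessary, discarding the smaller of the positive and negative parts of $A$ (which costs only a harmless constant), we may assume $A \subset \ZZ_{>0}$. Let $p_1 < \cdots < p_d$ be the primes occurring in elements of $A$ and let $\phi \colon A \to \ZZ_{\ge 0}^d$ record the exponents of $p_1, \dots, p_d$. Then $\phi$ is injective and turns multiplication into addition, so $B := \phi(A)$ has $|B| = |A|$ and $|B+B| = |AA| \le K|B|$. By Freiman's Lemma, a set of affine dimension $s$ in $\ZZ^d$ satisfies $|B+B| \ge (s+1)|B| - \binom{s+1}{2}$; feeding in $|B+B| \le K|B|$ and the fact that $|B|$ is large forces $s \le K-1$. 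Call $s$ the \emph{multiplicative dimension} of $A$.

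\emph{Induction on the multiplicative dimension.} It suffices to prove: if $A \subset \QQ_{>0}$ has multiplicative dimension $\le s$, then $|mA| \ge |A|^m/(2m^2-m)^{m(s+1)}$ for every $m \ge 1$; applying this with $s = K-1$, $m = k$ gives the theorem. For $s \le 1$ the set $A$ is (a dilate of) a geometric progression $\{g^i : i \in I\}$, and distinct $m$-element subsets of the exponent set produce distinct sums $\sum_t g^{i_t}$ — inspect a prime dividing the numerator or denominator of $g$, or argue with base-$g$ digits when $g \in \ZZ$ — so $|mA| \ge \binom{|A|}{m} \ge |A|^m/(2m^2-m)^{2m}$. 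For the inductive step, slice $A$ by the exponent of the largest prime $p := p_d$ (which we may assume is non-constant on $A$): $A = \bigsqcup_j p^j C_j$ with each $c \in C_j$ prime to $p$, so every $C_j$ lives among $p_1,\dots,p_{d-1}$ and has multiplicative dimension $\le s-1$. A sum of $k$ elements of $A$ has the form $\sum_j p^j \sigma_j$ where $\sigma_j$ ranges over the $m_j$-fold sumset $m_j C_j$, for some weights $m_j \ge 0$ with $\sum_j m_j = k$. The inductive hypothesis gives $|m_j C_j| \ge n_j^{m_j}/(2k^2-k)^{m_j s}$ (writing $n_j = |C_j|$), so for each fixed weight vector the resulting set of sums has size $\ge (\prod_j n_j^{m_j})/(2k^2-k)^{ks}$, \emph{provided} the blocks $p^j\sigma_j$ do not interfere. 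Summing the (putatively disjoint) contributions of the distinct weight vectors and using the multinomial identity $\sum_{(m_j)}\binom{k}{m_0,m_1,\dots}\prod_j n_j^{m_j} = (\sum_j n_j)^k = |A|^k$ together with $\max_{(m_j)}\binom{k}{m_0,m_1,\dots} \le k! \le (2k^2-k)^k$ yields
\[
|kA| \ \ge\ \frac{1}{(2k^2-k)^{ks}} \sum_{(m_j)} \prod_j n_j^{m_j} \ \ge\ \frac{|A|^k}{(2k^2-k)^{ks}\, k!} \ \ge\ \frac{|A|^k}{(2k^2-k)^{k(s+1)}}.
\]

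\emph{The main obstacle.} The delicate point is the interference flagged above: a block $p^j \sigma_j$ can absorb extra powers of $p$ (already $1 + (p-1) = p$), so neither is the map $(\sigma_j)_j \mapsto \sum_j p^j \sigma_j$ injective, nor are the contributions of different weight vectors automatically disjoint. Overcoming this — by restricting to weight vectors supported on sufficiently $p$-separated slices, or passing to a suitable subset, and checking that this costs at most a further factor $(2k^2-k)^k$ — is the technical heart, and it is precisely this bookkeeping that pins down the constant $2k^2-k = \binom{2k}{2}$. (Equivalently, one can run the same induction for the higher additive energy $E_k(A) = \#\{\sum_t a^{(t)} = \sum_t b^{(t)} : a^{(t)}, b^{(t)} \in A\}$, proving $E_k(A) \le |A|^k (2k^2-k)^{kK}$ — the case $k=1$ being the equality $E_1(A) = |A|$ — and then deducing the bound from $|kA| \ge |A|^{2k}/E_k(A)$ by Cauchy--Schwarz.)
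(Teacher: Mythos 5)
Your structural setup mirrors the start of Chang's argument (and the analogue proved in Section 4 of this paper): move to $\ZZ^d$ via prime exponents, invoke Freiman's Lemma to bound the affine dimension, and induct on that dimension. But the inductive step has a genuine gap, which you yourself flag as ``the main obstacle'' and then do not resolve, and it is precisely this step that carries all of the difficulty. In physical space, a block $p^j\sigma_j$ can carry into higher powers of $p$ (even $1+(p-1)=p$), so the map $(\sigma_j)_j \mapsto \sum_j p^j\sigma_j$ need not be injective and contributions from different weight vectors need not be disjoint. Your proposed remedies (``restrict to $p$-separated slices'' or ``pass to a suitable subset'') are not substantiated, and it is not clear they can work: the slices $C_j$ can be arbitrary sets of the remaining primes, so there is no control on how carries propagate. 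Moreover, even granting your proposed extra loss of $(2k^2-k)^k$ from handling interference, the arithmetic does not close: the multinomial coefficient bound already uses up a factor $(2k^2-k)^k$, so you would land at $(2k^2-k)^{k(s+2)}$ rather than $(2k^2-k)^{k(s+1)}$, giving exponent $k(K+1)$ rather than $kK$.

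The way Chang (and the present paper in its analogous Lemma~\ref{prop:basecaseint} and Lemma~\ref{mainlemint}) avoids carries entirely is by passing to trigonometric (here Dirichlet) polynomials and working with $\Lambda$-constants. One expands $\bigl|\sum_j f_j\bigr|^{2k}$ into a sum over $2k$-tuples $(j_1,\dots,j_{2k})$, observes that a term vanishes unless two of the $j_i$ coincide (a purely local $p$-adic fact, immune to carries), notes there are $k^2 + 2\binom{k}{2} = 2k^2-k$ positions where a coincidence can occur, and then applies H\"older to peel off a factor $\sum_j\|f_j\|_{2k}^2$. Iterating over the $K$ primes gives the $(2k^2-k)^{K}$ bound on the $\Lambda_k$-constant, and Cauchy--Schwarz converts this into the sumset bound. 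Your parenthetical remark about running the induction for $E_k(A)$ is pointing in the right direction, but that induction must be carried out in the transform (polynomial) picture rather than by slicing sumsets, because the key observation --- ``a distinct minimum valuation among the $j_i$ kills the term'' --- is a statement about equality of products/sums, not about cardinalities of sliced images. As written, your proof is incomplete at the crucial step. (Two smaller points: discarding whichever of $A\cap\QQ_{>0}$, $A\cap\QQ_{<0}$ is smaller costs a factor $2^k$ in the final bound, so it is not free; and the base case for a rational ratio $g$ needs a valuation argument rather than base-$g$ digits, which you gesture at but do not carry out.)
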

The results proved in \cite{chang2003erdHos} were actually somewhat more general. It was established that 
\begin{equation} \label{Changenergy2}
E_k^+(A) \leq |A|^k (2k^2-k)^{kK},
\end{equation}
where $E_k^+(A)$ is the \textit{$k$-fold additive energy}, which is defined as the quantity
$$E_k^+(A):=|\{a_1+\dots+a_k=a_{k+1}+\cdots +a_{2k} :a_1,\dots,a_{2k}\in A\}|.$$
Note that the trivial solutions with $a_1=a_{k+1}, \dots, a_k=a_{2k}$ ensure that $E_K^+(A) \geq |A|^k$ and so  \eqref{Changenergy2} is a factor of $(Ck^2)^{kK}$ away from being optimal. Inequality \eqref{Changbound} follows from \eqref{Changenergy2} after an application of the Cauchy--Schwarz inequality. 

Giving a result with even more generality, Chang in fact proved a version of \eqref{Changenergy2} with weighted energy.
See the forthcoming Section \ref{weights} for a discussion on energy and weighted energy featuring the relevant definitions.

In the statement of Theorem \ref{thm:Chang}, we think of $k$ as a fixed constant. The theorem then gives a very strong lower bound for the size of $kA$  when $K$ is significantly smaller than $\log |A|$. However, if we push to the range $K=|A|^{\epsilon}$ for some positive $\epsilon$, then Theorem \ref{thm:Chang} gives only a trivial bound.

In a follow-up paper of Bourgain and Chang \cite{bourgain2004size}, this method was used as a foundation and developed considerably in order to prove the following result. 

\begin{Theorem} \label{thm:BC1} Let $k \geq 2$ be a fixed integer. Given $\gamma >0$, there is a constant $\Lambda= \Lambda(\gamma , k)$ such that for all finite sets $A \subset \mathbb Q$ with $|AA| \leq K|A|$, 
$$|kA| \geq \frac{|A|^{k-\gamma}}{K^{\Lambda}}.$$
\end{Theorem}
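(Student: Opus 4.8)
The plan is to prove the equivalent energy bound
\[
  E_k^+(A) \le |A|^{k+\gamma}K^{\Lambda}
\]
for a suitable $\Lambda=\Lambda(\gamma,k)$; granting this, Cauchy--Schwarz gives $|kA| \ge |A|^{2k}/E_k^+(A) \ge |A|^{k-\gamma}/K^{\Lambda}$, exactly as \eqref{Changbound} was deduced from \eqref{Changenergy2}. In these terms the task is to replace the factor $(2k^2-k)^{kK}$ of \eqref{Changenergy2} by $|A|^{\gamma}K^{\Lambda}$, which is a genuine gain precisely in the regime $K \gg \log|A|$ where Theorem~\ref{thm:Chang} is vacuous. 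First I would normalize: clearing a common denominator changes neither $|kA|$ nor $|AA|/|A|$, and restricting to the positive part of the resulting integer set (negating if necessary, discarding $0$) costs only a constant factor, harmless for the final bound; so assume $A \subset \ZZ_{>0}$. Encoding each $a\in A$ by its vector of prime exponents realizes $(A,\times)$ as a finite set $V$ in the free abelian group $\bigoplus_p \ZZ$ with $|V+V| \le K|V|$, a set of small additive doubling in a torsion-free group. The structural difficulty is that we must now control the purely \emph{additive} energy of the integers $A$, which is blind to the lattice structure on $V$.

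For the main estimate my plan is an induction that peels off one prime at a time. Fix a prime $p$ and write $A = \bigsqcup_j p^j B_j$ with each $B_j$ coprime to $p$. Since $p$-adic valuations add under multiplication, $|AA|\le K|A|$ constrains the profile $(|B_j|)_j$ strongly (a Plünnecke-type count done level by level), so one of two things happens: either a single fiber $B_{j_0}$ carries a positive proportion of $|A|$, and we recurse on $B_{j_0}$, which involves strictly fewer primes; or the additive contributions of the various powers of $p$ essentially decouple. In the decoupled case one uses that, for a \emph{fixed} number $k$ of summands, an integer has only $O_k(1)$ representations as a $k$-term sum of elements of a single geometric progression (a carry/base-$p$ argument, equivalently $S$-unit finiteness in rank one), which lets a $k$-fold additive coincidence in $A$ be split into boundedly many lower-complexity coincidences among the $B_j$. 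Iterating, one is reduced to $A$ being essentially a product of boundedly many geometric progressions --- a multiplicative generalized progression of bounded rank --- where the additive coincidences are governed by $S$-unit finiteness and one obtains $E_k^+(A) \le O(1)\cdot|A|^k$, hence $|kA|\gtrsim|A|^k$ outright.

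The crux --- and why this is far deeper than Theorem~\ref{thm:Chang}, needing a new idea even for $k=2$ --- is quantitative: run verbatim, the scheme above bounds $E_k^+(A)$ by $|A|^k$ times a constant governed by the rank in a Freiman-type theorem and by $S$-unit counts, both of which depend on $K$ at best quasi-polynomially, so this merely reproduces Chang-strength results. The point is that the slack $|A|^{\gamma}$ must pay for an $L^{2k}$-versus-$L^{2}$ comparison rather than a structure theorem. Writing $E_k^+(A) = \|\sum_{a\in A} e(a\theta)\|_{2k}^{2k}$ and $|A|^k = \|\sum_{a\in A} e(a\theta)\|_2^{2k}$, what one really wants is a near-$\Lambda(2k)$ inequality for $A$ with constant $|A|^{\gamma/2k}K^{\Lambda/2k}$ --- equivalently, in the multiplicative world, a $\Lambda(2k)$-bound for the Dirichlet polynomial $\sum_{a\in A} a^{it}$, which is the viewpoint the present paper develops. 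Concretely I would: (i) by dyadic pigeonholing pass to a large subset of $A$ whose multiplicative doubling is uniformly small across all relevant scales; (ii) prove the $\Lambda(2k)$-estimate for that subset by induction on $k$, taking Chang's inequality \eqref{Changenergy2} (or its weighted refinement) as the base case and using Hölder to deposit each round's combinatorial loss into the $|A|^{\gamma}$ budget rather than into $K$; and (iii) reassemble the pieces using Plünnecke--Ruzsa sumset inequalities, whose dependence on $K$ is polynomial. Step (ii) --- keeping the dependence on $K$ polynomial all the way through the induction --- is where essentially all the difficulty lies and is the step I expect to be the genuine obstacle.
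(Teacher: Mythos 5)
This theorem is not proved in the paper at all: it is cited as a result of Bourgain and Chang from \cite{bourgain2004size}, and the paper's own new contribution (Theorems \ref{thm:main} and \ref{thm:energyrationalcase}) concerns the different problem of iterated products of shifts, handled via Dirichlet polynomials. So there is no in-paper proof to compare against, and your proposal must stand on its own.

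As a proof it does not close. You correctly set up the prime-exponent encoding, the fiberwise decomposition $A = \bigsqcup_j p^j B_j$, and the dichotomy \emph{large fiber: recurse} versus \emph{spread out: decouple}. But the quantitative claims in between are wrong. First, the step "iterating, one is reduced to $A$ being essentially a product of boundedly many geometric progressions" does not follow: Freiman's lemma gives rank at most $K$, not $O(1)$, and $K$ can be as large as $|A|^{\epsilon}$, which is exactly the regime this theorem is designed for. Consequently "run verbatim, the scheme above bounds $E_k^+(A)$ by $|A|^k$ times a constant ... which depend[s] on $K$ at best quasi-polynomially" is also false --- the scheme as you describe it is precisely Chang's, and it loses a factor $(2k^2)^K$, i.e.\ exponentially in $K$, just as in \eqref{Changenergy2}. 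Second, the asserted equivalence between a $\Lambda(2k)$-bound for $\sum_{a\in A} e(a\theta)$ and one for the Dirichlet polynomial $\sum_{a\in A} a^{it}$ conflates two different quantities: the former controls the additive energy $E_k^+(A)$ that this theorem is about, while the latter controls the multiplicative energy $E_k(A)$ used in the present paper for $(A+1)^{(k)}$. These are not interchangeable, and this confusion infects step (ii) of your plan. Finally, and most importantly, you yourself identify step (ii) --- running the induction with only polynomial loss in $K$ --- as "the genuine obstacle" and do not resolve it. That step is the entire content of Bourgain--Chang; their argument replaces the naive one-prime-at-a-time recursion by a pruning/regularization scheme in which the "spread out" case can occur only a bounded number of times before the doubling constant of a large subset drops substantially, which is what converts the exponential-in-$K$ loss into polynomial-in-$K$. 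Without that mechanism (or an explicit substitute), what you have is a restatement of the difficulty rather than a proof.
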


In particular, this now gives an excellent lower bound for $|kA|$ when $K=|A|^{\epsilon}$, for some small but positive $\epsilon$.

Once again, a more general version of Theorem \ref{thm:BC1} was actually proved in \cite{bourgain2004size}, giving an upper bound for the weighted energy of $A$. This level of generality was important for establishing the main result of \cite{bourgain2004size}, which was the following result on unbounded growth of sums or products.

\begin{Theorem} \label{thm:BC2}
For all $b \geq 0$, there is an integer $k=k(b)$ such that for all $A \subset \mathbb Q$ with $|A| \geq 2$,
$$\max \{ |kA|, |A^{(k)}|\} \geq |A|^b.$$
\end{Theorem}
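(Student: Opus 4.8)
The plan is to show that for a suitable $k=k(b)$ (calibrated against $b$ at the very end), if $|A^{(k)}|<|A|^{b}$ then automatically $|kA|\ge |A|^{b}$; combined with the trivial alternative $|A^{(k)}|\ge |A|^{b}$, this gives the theorem. One may assume $b>1$, since for $b\le 1$ both $|kA|$ and $|A^{(k)}|$ are at least $|A|\ge |A|^{b}$. Two facts drive the argument. First, a product set controls the ordinary doubling: fixing any $a_{0}\in A\setminus\{0\}$ (discard $0$ first if it lies in $A$), the injection $x\mapsto a_{0}^{k-2}x$ maps $AA$ into $A^{(k)}$, so $|AA|\le |A^{(k)}|$. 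Second, and crucially, I want the multiplicative doubling $K:=|AA|/|A|$ to be genuinely small; this follows from a lower bound for the growth of iterated product sets of the shape $|A^{(k)}|\ge |A|\cdot K^{\,c\log k}$ with an absolute constant $c>0$ --- an approximate log-concavity of $j\mapsto |A^{(j)}|$ obtained from Pl\"unnecke--Ruzsa inequalities in the torsion-free group $\mathbb{Q}^{\times}$. Combined with $|A^{(k)}|<|A|^{b}$ this yields $K<|A|^{\epsilon_{k}}$ where $\epsilon_{k}:=(b-1)/(c\log k)\to 0$ as $k\to\infty$.

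Feeding the small-doubling bound $|AA|\le K|A|$, $K<|A|^{\epsilon_{k}}$, into Theorem~\ref{thm:BC1} --- in fact into its weighted-energy strengthening, for the uniformity discussed below --- with parameter $\gamma=1$ gives
$$|kA|\;\ge\;\frac{|A|^{\,k-1}}{K^{\Lambda(1,k)}}\;\ge\;|A|^{\,k-1-\epsilon_{k}\Lambda(1,k)}\;=\;|A|^{\,k-1-(b-1)\Lambda(1,k)/(c\log k)}.$$
This exceeds $|A|^{b}$ as soon as $\Lambda(1,k)\le \tfrac{c\log k}{b-1}\,(k-b-1)$, which holds once $k=k(b)$ is large enough --- \emph{provided} the loss $\Lambda(\gamma,k)$ in Theorem~\ref{thm:BC1} grows at most essentially linearly in $k$ for fixed $\gamma$. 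For such a $k$ we conclude $|kA|\ge |A|^{b}$, as required.

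The hard part is the interaction of these two quantitative ingredients. One needs the product-set growth estimate $|A^{(k)}|\ge |A|\,K^{\,c\log k}$ with an \emph{absolute} $c$, and simultaneously one needs the constant $\Lambda(\gamma,k)$ of Theorem~\ref{thm:BC1} to depend only mildly on $k$ --- mildly enough to be outpaced by the $\log k$ gain hidden in the smallness $\epsilon_{k}$ of the doubling --- so that $k$ depends on $b$ but not on $|A|$. The bare statement of Theorem~\ref{thm:BC1} says nothing about how $\Lambda(\gamma,k)$ varies with $k$, and securing the required control is precisely why Bourgain and Chang worked with the weighted (energy) version: a $\Lambda(p)$-type inequality for Dirichlet polynomials, of the kind revisited in the present paper, whose freedom to carry an arbitrary multiplicity weight on $A$ is what lets the reductions above be performed uniformly. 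Carrying out this calibration, and verifying that $k=k(b)$ is genuinely independent of $|A|$, is the delicate step.
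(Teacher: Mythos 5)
First, note that the paper itself does not prove Theorem~\ref{thm:BC2}: it is quoted from Bourgain--Chang \cite{bourgain2004size} as context, and the paper only establishes the shifted-product analogue of the much simpler Chang estimate. So there is no in-paper proof to compare against, but your argument can be assessed on its own terms, and it has a genuine gap.

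The whole reduction hinges on the inequality
\[
|A^{(k)}|\ \geq\ |A|\cdot K^{c\log k},\qquad K:=\frac{|AA|}{|A|},
\]
which you describe as an ``approximate log-concavity'' of $j\mapsto |A^{(j)}|$ coming from Pl\"unnecke--Ruzsa. This is what is supposed to upgrade the hypothesis $|A^{(k)}|<|A|^{b}$ to the small-doubling bound $K<|A|^{\epsilon_{k}}$ with $\epsilon_{k}\to 0$. The inequality is false. Pl\"unnecke--Ruzsa inequalities give \emph{upper} bounds on iterated product sets in terms of the doubling constant; they say nothing of this sort in the other direction, and no such log-concavity holds. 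Concretely, let $G=\{r,r^{2},\ldots,r^{N}\}\subset\mathbb Q^{\times}$ be a geometric progression and let $A\subset G$ be a random subset of density $\delta$ with, say, $N^{-1/2+\eta}\leq\delta\leq 1/2$, so $|A|=\delta N$. Passing to exponents identifies $A$ with a random density-$\delta$ subset of $\{1,\ldots,N\}$, and a routine second-moment computation shows that with high probability $|AA|\geq (2-o(1))N$; since also $|AA|\leq 2N-1$, the doubling $K=|AA|/|A|$ is of order $1/\delta$ and can be made arbitrarily large. On the other hand $A^{(k)}\subset G^{(k)}$, so $|A^{(k)}|\leq kN$, which gives $|A^{(k)}|/|A|\leq k/\delta$, i.e.\ at most of order $kK$ --- only \emph{linear} in $K$. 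Your claimed lower bound $K^{c\log k}$ is superlinear in $K$ as soon as $k>e^{1/c}$, so letting $K$ grow (take $\delta$ small and $N$ large) contradicts it. In short, small iterated product sets do not force a doubling constant with a $\log k$ savings in the exponent, and without that savings your $\epsilon_{k}$ does not go to zero: the only thing $|A^{(k)}|<|A|^{b}$ gives directly (via $|AA|\leq |A^{(k)}|$) is $K<|A|^{b-1}$, which is useless once $b>1$.

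The actual Bourgain--Chang argument therefore does not try to make $A$ itself have small doubling. Instead it pigeonholes along dyadic scales: with $2^{m}\leq k$ one has $\prod_{i=0}^{m-1}\bigl(|A^{(2^{i+1})}|/|A^{(2^{i})}|\bigr)=|A^{(2^{m})}|/|A|<|A|^{b-1}$, so some $B=A^{(2^{i})}$ satisfies $|BB|/|B|<|A|^{(b-1)/m}$. But now the small-doubling set is $B$, not $A$, and to deduce anything about $|k'A|$ one must run the energy estimate against the \emph{multiplicity weight} that the $2^{i}$-fold product representation of $A$ induces on $B$, and transfer it back. That transfer is precisely where the weighted $\Lambda_{k}$ formulation is not a convenience but the essential point of the method --- the multiplicities cannot be discarded. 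You correctly sense that the weighted formulation and the growth of $\Lambda(\gamma,k)$ are delicate, but the reduction that is meant to precede them already fails, so the calibration you describe never becomes available.
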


See Zhelezov \cite{Z} for an exposition of the work of \cite{chang2003erdHos} and \cite{bourgain2004size}.


In this paper, we consider an alternative formulation of the sum-product problem with products and products of shifts. Given an arbitrary $A \subset \mathbb R$, sum-product heuristics lead one to believe that $\max \{|AA|, |(A+1)(A+1)|\}$ is large. Roughly speaking, this is an assertion that if $A$ is multiplicatively structured then an additive shift will disturb this structure. A similar problem was considered in the finite field setting by Bourgain \cite{bourgain2005}, and it was established by Garaev and Shen \cite{GS} that for any finite $A \subset \mathbb R$,
\begin{equation}
\max \{|AA|, |(A+1)(A+1)|\} \geq c|A|^{5/4},
\label{GS}
\end{equation}
where $c>0$ is an absolute constant. See also Jones--Roche-Newton \cite{JRN} for a slightly improved exponent. In principle, the value of $1$ for the shift is not important, and a shift by any non-zero $x$ should give the same outcome. Indeed, the proof of \eqref{GS} works in exactly the same way if $1$ is replaced with any $x \neq 0$.

It seems plausible that the numerology of the Erd\H{o}s--Szemer\'{e}di conjecture holds for this problem too. That is,  we expect that $\max\{|A^k|, |(A+1)^k|\}$ should be close to $|A|^k$. The main result of this paper proves a result in this direction, although under the stronger assumption that $A$ has small multiplicative doubling. 

\begin{Theorem} \label{thm:main}
Let $A \subset \mathbb Q$ be finite and suppose that $|AA|=K|A|$. Then, for any $k \geq 2$,
$$|(A+1)^{(k)}| \geq \frac{|A|^k}{(8k^4)^{kK}}. $$
\end{Theorem}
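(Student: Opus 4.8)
The plan is to follow Chang's strategy, but in the multiplicative setting and applied to the shifted set $A+1$. Write $A = \{a_1, \dots, a_n\}$, and consider the Dirichlet-type polynomial $f(z) = \sum_{a \in A} z^{\langle v(a)\rangle}$, where $v(a)$ records the vector of prime exponents appearing in the factorization of $a+1$ (together with a coordinate for the sign), so that the $k$-fold product $(a_1+1)\cdots(a_k+1)$ is encoded additively by the sum $v(a_1) + \dots + v(a_k)$. Then $|(A+1)^{(k)}|$ equals the number of distinct monomials appearing in $f(z)^k$, i.e. the number of nonzero Fourier-type coefficients of $f^k$, while the $L^2$-norm computation gives $\sum_c |\widehat{f^k}(c)|^2 = E_k^{\times}(A+1)$, the $k$-fold multiplicative energy of $A+1$. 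By Cauchy--Schwarz, $|(A+1)^{(k)}| \geq |A|^{2k} / E_k^{\times}(A+1)$, so the heart of the matter is to prove the energy bound
$$E_k^{\times}(A+1) \leq |A|^k (8k^4)^{kK}.$$

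The key step, and the one advertised in the abstract, is a multiplicative analogue of the $\Lambda(p)$-constant machinery: one wants to show that $\|f\|_{2k} \leq C \|f\|_2$ with $C$ of the right shape, which is exactly the assertion $E_k^{\times}(A+1) = \|f\|_{2k}^{2k} \leq C^{2k}|A|^k$. The bridging observation is that the multiplicative structure of $A$ (not $A+1$!) is what is controlled: the hypothesis $|AA| = K|A|$ says, via Freiman-type / Plünnecke--Ruzsa reasoning over $\mathbb{Q}$, that $A$ is efficiently covered by a multiplicatively structured set, equivalently that the primes occurring in the numerators and denominators of elements of $A$ span a space of dimension roughly $O(K)$ after passing to a large subset (this is the rational-numbers phenomenon exploited in \cite{chang2003erdHos, bourgain2004size}). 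One then needs to transfer this to control on $A+1$: the point is that $f$, as a function on a group of rank $d = O(K)$, has its $\ell^2$ mass spread over $|A|$ characters, and a $\Lambda(2k)$-type inequality for such polynomials on a bounded-rank group yields $\|f\|_{2k} \ll (\text{poly in } k \text{ and } d)^{?}\|f\|_2$. Tracking the constants carefully — each "dimension" contributing a factor like $2k^2$ or $8k^4$ per unit of $K$ — gives the stated bound $(8k^4)^{kK}$.

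I would organize the write-up as: (1) reduce Theorem \ref{thm:main} to the multiplicative energy bound via Cauchy--Schwarz and the Dirichlet-polynomial encoding; (2) use $|AA| \leq K|A|$ to produce, after passing to a suitable large subset $A' \subseteq A$, a description of $A'$ inside a multiplicative progression / a set whose "prime support" has bounded effective dimension, and argue that it suffices to bound $E_k^{\times}(A'+1)$; (3) prove the multiplicative $\Lambda$-constant inequality for Dirichlet polynomials of bounded rank, which is the technical core — likely by an induction on the rank (peeling off one prime at a time) with a Rudin--type / hypercontractivity-flavored estimate at each stage, or by directly estimating the number of multiplicative $k$-tuples with equal products; (4) assemble the constants. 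The main obstacle I anticipate is step (3) combined with the fact that the good multiplicative structure lives on $A$ while the energy we must bound lives on $A+1$: one cannot simply apply a Freiman homomorphism, so the argument has to exploit that the \emph{same} elements $a$ control both $a$ and $a+1$ only through the ambient field structure, and making the dimension/constant bookkeeping come out to $8k^4$ (rather than something worse) will require a careful choice of which subset $A'$ to pass to and a tight version of the $\Lambda(2k)$ inequality. A secondary subtlety is handling zero and the shift $a+1 = 0$ (i.e. $-1 \in A$), which must be excluded or absorbed into the constant.
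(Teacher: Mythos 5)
Your high-level framework matches the paper: Dirichlet polynomials, a multiplicative $\Lambda$-constant, Freiman's Lemma to get bounded multiplicative dimension, and Cauchy--Schwarz at the end. However, there is a genuine gap at exactly the point you flag as ``the main obstacle,'' and the encoding you propose would not resolve it.

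You suggest indexing the Dirichlet polynomial by the prime-exponent vector of $a+1$, so that products in $(A+1)^{(k)}$ become additive. But the hypothesis $|AA|\leq K|A|$ controls the prime support of $A$, not of $A+1$, and there is no control whatsoever on the multiplicative rank of $A+1$. The paper's resolution is different and is the actual heart of the argument: one keeps the Dirichlet polynomial $\sum_a w_{a+1}(a+1)^{it}$ but \emph{partitions the index set according to the $p$-adic valuations of $a$ itself} (which Freiman's Lemma confines to a $K$-dimensional affine lattice). The key base-case lemma (Lemma~\ref{prop:basecaseint}) then shows that a collision $(m_1p^{j_1}+u)\cdots(m_kp^{j_k}+u)=(m_{k+1}p^{j_{k+1}}+u)\cdots(m_{2k}p^{j_{2k}}+u)$ with all $m_i$ coprime to $p$ forces at least two of the exponents $j_i$ to coincide: one isolates the unique minimal $j_i$ and gets a contradiction by reducing modulo the next power of $p$. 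This divisibility observation is precisely how structure on $A$ transfers to collisions of $A+1$, and it is absent from your sketch. Without it, step~(3) of your outline (``prove the multiplicative $\Lambda(2k)$ inequality for bounded rank'') has no mechanism; a Rudin- or hypercontractivity-style bound for arbitrary Dirichlet polynomials of bounded rank is not what is used, and indeed could not see the shift at all.

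Two further points are missing from the proposal. First, the rational case is genuinely harder than the integer case because valuations can be negative, and the minimal-exponent argument only works for non-negative exponents. The paper handles this by splitting $\cJ\subset\ZZ^K$ into $2^K$ pieces according to which coordinates are non-negative (contributing the factor $2^K$ in $(4k^2)^K=(2\cdot 2k^2)^K$), applying the non-negative-exponent lemma on those coordinates, and then, for each fixed ``positive part,'' using the Claim to replace the collision equation for $a_{\jj}+1$ with the equivalent one for $1+a_{\jj}^{-1}$, which flips the remaining negative exponents to positive and allows a second application of Lemma~\ref{mainlemint} (contributing the other $(2k^2)^K$). This inversion trick is not something your sketch anticipates, and it is what produces the specific constant $(8k^4)^{K}$. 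Second, you propose passing to a large subset $A'\subseteq A$; the paper does not do this and does not need to, since Lemma~\ref{linalg} parametrizes \emph{all} of $A$ by $K$ free prime exponents. Your remark about $-1\in A$ (so $a+1=0$) is a fair minor concern, but it is not where the difficulty lies.
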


This is an analogue of Theorem \ref{thm:Chang} above, and gives an essentially optimal lower bound for $|(A+1)^k|$ in the case when $K < c\log|A|$. Again, we actually prove a more general result with energies and weights. See Theorem \ref{thm:energyrationalcase} for the full statement.

Note that, since this theorem holds for any set of rationals, it follows that the shift value $1$ can be replaced by any rational $\lambda \neq 0$. Indeed, if $|AA|=K|A|$ then also $|(\frac{A}{\lambda})(\frac{A}{\lambda})|=K|A|$ for any rational $\lambda$. We can therefore apply Theorem \ref{thm:main} to the set $A/\lambda $ and conclude that
$$|(A+\lambda)^{(k)}|=\left|\left(\frac{A}{\lambda}+1\right)^{(k)}\right| \geq \frac{|A|^k}{(8k^4)^{kK}}. $$

It seems that an analogue of Theorem \ref{thm:BC2} for products and products of shifts would have several interesting consequences in additive number theory. The authors consider this problem in a forthcoming paper \cite{HRNZ2}.

Finally, it is worth noting that one can prove a weaker version of Theorem~\ref{thm:main} by using quantitative bounds for the Subspace Theorem \cite{evertse2002linear} similarly to \cite{chang2006sum}. The resulting bound will be of the form
$$
|(A+1)^{(k)}| \geq e^{-c(k)K} |A|^k.
$$
Apart from relying on a deep and hard result \cite{evertse2002linear}, the main disadvantage of such an approach is that the dependence $c(k)$ turns out to be triply exponential in $k$ (comparing to almost linear in our case). Indeed, the number of terms after expanding the brackets in $(A+1)^k$ grows exponentially in $k$, while the state-of-the-art quantitative bound for the Subspace Theorem is doubly exponential in the number of variables; see \cite{chang2006sum},\cite{amoroso2009small} for further details. 

On the other hand, the dependence $c(k)$ may be important for applications. We record the following application to $S$-unit equations as an example.

\begin{Corollary} \label{corr:height}
Let $p_1, \ldots, p_r$ be a set of primes and $S$ be a set of rational numbers of the form $$
s = p^{\alpha_1}_1 \ldots p^{\alpha_r}_r
$$
with $|\alpha_i| \leq H$. Then for any rational $c_1, c_2 \neq 0$ the number of solutions $(s_1, s_2) \in S \times S$ to
$$
c_1s_1 + c_2s_2 = 1
$$
is bounded by $(\log H)^{C2^r}$ with some absolute effective constant $C > 0$.
\end{Corollary}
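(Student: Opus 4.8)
The plan is to deduce Corollary~\ref{corr:height} from Theorem~\ref{thm:main} (or rather its energy version, Theorem~\ref{thm:energyrationalcase}) by taking $A$ to be the set $S$ of $S$-units and exploiting that $S$ has very small multiplicative doubling. First I would observe that $S$ is contained in the multiplicative group generated by $p_1,\dots,p_r$, and more precisely in the ``box'' of elements $p_1^{\alpha_1}\cdots p_r^{\alpha_r}$ with $|\alpha_i|\le H$; hence $SS$ lies in the analogous box with $|\alpha_i|\le 2H$, giving $|SS|\le 2^r|S|$, i.e.\ one may take $K=2^r$ in Theorem~\ref{thm:main}. (One must be slightly careful if $S$ is not all of the box: replacing $S$ by the full box only increases the count of solutions, so it is harmless to assume $|S|=(2H+1)^r$ and $K\le 2^r$.)

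Next I would set up the solution-counting. A solution $(s_1,s_2)$ to $c_1 s_1 + c_2 s_2 = 1$ can be rewritten, after dividing by $c_2 s_2$, as $\frac{c_1}{c_2}\cdot\frac{s_1}{s_2} + 1 = \frac{1}{c_2 s_2}$; equivalently, each solution produces an element of the form $(a+1)$ with $a = \frac{c_1 s_1}{c_2 s_2}$ ranging over a multiplicatively structured set, and the product of such shifted terms is constrained. The cleanest route is to use the $k$-fold statement: if there were $N$ solutions, then there are $N$ pairs $(s_1,s_2)$ with $c_1 s_1 + c_2 s_2 = 1$, and taking $k$-fold products of the shifts $\frac{c_1 s_1}{c_2 s_2}+1 = \frac{1}{c_2 s_2}$ forces $|(A'+1)^{(k)}|$ to be small, where $A' = \frac{c_1}{c_2}\cdot\frac{S}{S}$ (again a set of multiplicative doubling $\le 2^{2r}$ or so), because each such product equals $\prod (c_2 s_2^{(i)})^{-1}$, which lives in a set of size at most $(2kH+O(1))^r$. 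Comparing this with the lower bound $|(A'+1)^{(k)}|\ge |A''|^k/(8k^4)^{k\cdot 2^{O(r)}}$ coming from Theorem~\ref{thm:main}, where $A''\subseteq A'$ is the subset of ``good'' ratios realised by solutions and $|A''|=N'$ counts distinct ratios, yields an inequality of the shape $N'^{k} \le (Ck H)^{r} (8k^4)^{k\cdot 2^{O(r)}}$. Optimising over $k$ — taking $k$ of order $\log\log H$ roughly — collapses this to $N' \le (\log H)^{C 2^r}$, and finally one passes from the count of distinct ratios $N'$ back to the count of solution pairs $N$ (these differ by at most a bounded multiplicative factor since the map $(s_1,s_2)\mapsto s_1/s_2$ together with the equation determines the pair up to scaling, or one argues directly that distinct solutions give distinct ratios).

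The main obstacle I anticipate is bookkeeping the several multiplicative-doubling constants and choosing $k$ correctly so that the exponential-in-$k$ loss $(8k^4)^{k\cdot 2^{O(r)}}$ is balanced against the polynomial-in-$k$ gain $N'^k$ versus the $(kH)^{O(r)}$ term; the delicate point is that $K\approx 2^r$ already appears in the exponent of the $(8k^4)^{kK}$ factor, so the final bound must genuinely be of the form $(\log H)^{C2^r}$ rather than, say, $(\log H)^{\mathrm{poly}(r)}$, and getting the dependence on $r$ to sit in the exponent of $\log H$ (and not worse) requires that the $A$-side lower bound be applied with the near-optimal constant from Theorem~\ref{thm:main}, which is exactly what its almost-linear dependence on $K$ buys us. A secondary technical point is ensuring that passing to ratios $S/S$ and multiplying by $c_1/c_2$ does not destroy the ``$\mathbb Q$'' hypothesis or inflate $K$ beyond $2^{O(r)}$ — this is routine since $c_1/c_2$ is a fixed rational and scaling does not change multiplicative doubling.
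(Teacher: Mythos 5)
Your overall strategy matches the paper's: bound the multiplicative $\Lambda_k$-constant of a shifted geometric box via Theorem~\ref{thm:energyrationalcase}, pass to the solution set via the stability Corollary~\ref{corr:stability}, upper-bound the $k$-fold shifted product set by containment in a small box, combine via Cauchy--Schwarz, and optimise over $k$. Where you differ is in the algebraic reformulation: the paper exploits $c_1 s_1 - 1 = -c_2 s_2$ directly, so with $S_1$ the set of $s_1$ occurring in a solution one has $c_1 S_1 - 1 \subset -c_2 G$ and hence $(c_1 S_1 - 1)^{(k)} \subset (-c_2)^k G^{(k)}$, immediately giving the upper bound on the product set; you instead form ratios $a = \tfrac{c_1 s_1}{c_2 s_2}$ so that $a+1 = \tfrac{1}{c_2 s_2}$, and then must track the solution-set correspondence between pairs and ratios. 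Both routes work, but the paper's avoids the intermediate set $S/S$ and the extra bookkeeping about distinct ratios.

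Two points in your sketch need repair. First, the step ``the lower bound $|(A'+1)^{(k)}| \geq |A''|^k/(8k^4)^{k\cdot 2^{O(r)}}$ coming from Theorem~\ref{thm:main}'' is not literally a consequence of Theorem~\ref{thm:main}: the subset $A''$ of good ratios has no a priori small doubling, so you cannot apply the theorem to $A''$, and applying it to the ambient box gives a lower bound in terms of $|A'|$, not $|A''|$. What you actually want is Corollary~\ref{corr:stability} applied with the ambient set being (a translate/dilate of) the box: $E_k(A''+1) \leq \Lambda_k(A'+1)^k |A''|^k$, combined with Cauchy--Schwarz $|A''|^{2k} \leq E_k(A''+1)\,|(A''+1)^{(k)}|$. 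You cite the energy version at the outset, so you are aware of this, but as written the displayed inequality is wrong. Second, and more seriously, the choice ``$k$ of order $\log\log H$'' does not close the optimisation. After rearranging you have $|A''| \lesssim (8k^4)^{2^{O(r)}}\,(CkH)^{r/k}$, i.e.\ $\log|A''| \lesssim 2^{O(r)}\log k + \tfrac{r}{k}\log(kH)$. With $k \asymp \log\log H$ the second term is $\asymp r\log H/\log\log H$, which is nearly linear in $\log H$, giving only $|A''| \leq H^{o(1)}$, not polylogarithmic. You need $k \asymp \log H/\log\log H$ (as in the paper) or $k \asymp \log H$, which makes both terms $\asymp 2^{O(r)}\log\log H$ and yields $|A''| \leq (\log H)^{C2^r}$ as required.
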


Corollary~\ref{corr:height} is much weaker than the celebrated result of Beukers and Schlikewei~\cite{beukers1996equation}, but may indicate a possible connection between sum-product estimates and Diophantine equations. We prove Corollary~\ref{corr:height} in Section~\ref{sec:corollary}.

\subsection{Outline of the proof}

Our proof strategy is based on the ideas that Chang used to prove Theorem \ref{thm:Chang}. Chang's strategy begins by using Freiman's Lemma to show that the elements of a set $A$ with $|AA|=K|A|$ are determined by the powers of a set $\{p_1,\dots,p_K\}$ of $K$ primes. The aim is to use this information to bound the number of solutions to
\begin{equation}
a_1+\dots+a_k=a_{k+1}+ \dots + a_{2k},
\label{Changenergy}
\end{equation}
such that $a_1,\dots,a_{2k} \in A$. The problem of bounding the number of solutions to \eqref{Changenergy} is converted into the problem of bounding certain trigonometric sums. The proof then proceeds by induction on $K$. If $K=1$ then it follows that some power of the determining prime $p$ must occur more than once in \eqref{Changenergy}. Using this and an application of H\"{o}lder's inequality establishes the base case, and closing the induction is then fairly straightforward.

We imitate this argument, with the role of trigonometric sums played instead by Dirichlet polynomials, since we are now bounding the number of solutions to the multiplicative equation
\begin{equation}
(a_1+1)\cdots(a_k+1)=(a_{k+1}+ 1)\cdots (a_{2k}+1).
\label{Ourenergy}
\end{equation}
When $A$ is a set of integers things go through rather smootly. However, the statement of Theorem \ref{thm:main} is not so satisfying in this case, since the lack of dilation invariance in our problem restricts the value of the additive shift we can take. The case when $A$ consists of rational numbers, and in particular when the powers of our determining prime may be negative, does not yield to this method immediately, and the major challenge we faced was to tackle this case.

To explain how we overcome this hurdle, let us begin with the base case $K=1$. We can control the number of solutions to \eqref{Ourenergy} by those solutions consisting only of positive powers of $p$ and those consisting of only negative powers. We can then bound those solutions coming from the first case by treating them as integers. However, we cannot yet say anything about the solutions coming from negative powers of $p$. We thus get a base case with two seperate terms, and this blows up into a rather complicated statement after the induction process. However, once we apply a little extra information coming from Freiman's Lemma, we are able to bound these terms with only negative powers, and after some tricky calculations the desired result follows.

Note that, for the case $k=2$ of Theorem \ref{thm:Chang}, and indeed also that of Theorem \ref{thm:BC1}, things are somewhat easier, as the whole proof can be carried out in the ``physical space", without the need for trigonometric sums. See \cite{Z} for the details of the modifications to the argument. A similar situation arises here, as the case $k=2$ of Theorem \ref{thm:main} can be proven without using Dirichlet polynomials. However, when $k=2$, a better version of Theorem \ref{thm:main} is available, even over $\mathbb R$; see Shkredov \cite[Theorem 12]{S}.

\subsection{Notation}

For two integers $a,b$ we write, as usual, $(a,b)=1$ if $a$ and $b$ are coprime. Similarly for $a,b \in \mathbb Q$ we say that $a$ and $b$ are coprime if, after writing $a=n_a/d_a$ and $b=n_b/d_b$ as reduced fractions, there is no prime which appears in both $a$ and $b$. That is, $(n_a,n_b)=(n_a,d_b)=(d_a,n_b)=(d_a,d_b)=1$.

We write $\mathbb Z_{ \geq 0}$ for the set of all non-negative integers, and $\mathbb Z_-$ for the set of all negative integers.

\section{Background on Dirichlet Polynomials}
Let $(w_n)_{n=1}^N$ be a finite sequence of non-negative reals. The associated Dirichlet polynomial is
\[\sum_{n}w_nn^{s}=\sum_{n}w_n\exp(s(\log n))\] which is a function of a complex variable $s$.
These polynomials are not periodic in $T$, but still satisfy similar properties to those of trigonometric polynomials when integrated over a long interval.

\begin{Lemma} \label{Dirbasic1}
For any $m,n\in \mathbb Q$ we have 
\[\int_0^T (m/n)^{it}dt=\begin{cases}T&\text{ if }m=n,
\\ O_{m,n}(1)&\text{ if }m\neq n.\end{cases}\]
Consequently, for $k\geq 1$, we have
\[\frac{1}{T}\int_0^T\left|\sum_{n}w_nn^{it}\right|^{2k}dt=\sum_{n_1\cdots n_k=n_{k+1}\cdots n_{2k}}w_{n_1}\cdots w_{n_{2k}}+o_{T\to\infty}(1)\]
where the implied constants in $o_{T\to \infty}(1)$ depend on the sequence $w_n$.
\end{Lemma}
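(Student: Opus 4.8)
The plan is to prove the single-integral identity by a direct computation and then deduce the $2k$-th moment formula by expanding the power, interchanging the (finite) sum with the integral, and applying the identity term by term.

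For the first assertion, interpret $(m/n)^{it}$ as $\exp\bigl(it\log(m/n)\bigr)$ for a fixed branch of the logarithm, so that $\log(m/n)=0$ exactly when $m=n$. If $m=n$ the integrand is identically $1$, giving $\int_0^T 1\,dt=T$. If $m\neq n$, then
\[
\int_0^T (m/n)^{it}\,dt=\frac{(m/n)^{iT}-1}{i\log(m/n)},
\]
and since the numerator stays bounded as $T\to\infty$ (modulus at most $2$ when $m/n>0$, and decaying to a constant when $m/n<0$), the integral is bounded by a constant depending only on $m$ and $n$; this is the claimed $O_{m,n}(1)$.

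For the second assertion, write
\[
\left|\sum_n w_n n^{it}\right|^{2k}=\left(\sum_n w_n n^{it}\right)^{k}\;\overline{\left(\sum_n w_n n^{it}\right)^{k}},
\]
and use that the $w_n$ are real and $\overline{n^{it}}=n^{-it}$ to rewrite the right-hand side as
\[
\sum_{n_1,\dots,n_{2k}} w_{n_1}\cdots w_{n_{2k}}\left(\frac{n_1\cdots n_k}{n_{k+1}\cdots n_{2k}}\right)^{it}.
\]
Because $(w_n)_{n=1}^N$ is a finite sequence, this is a finite sum, so one may integrate over $[0,T]$ termwise and divide by $T$. By the first assertion each tuple with $n_1\cdots n_k=n_{k+1}\cdots n_{2k}$ contributes exactly $w_{n_1}\cdots w_{n_{2k}}$, while every other tuple contributes $w_{n_1}\cdots w_{n_{2k}}\cdot O_{n_1,\dots,n_{2k}}(1/T)$; summing the at most $N^{2k}$ error terms, whose implied constants depend only on the tuples and on $(w_n)$, produces a total error of $o_{T\to\infty}(1)$ with implied constant depending on the sequence $(w_n)$, leaving precisely $\sum_{n_1\cdots n_k=n_{k+1}\cdots n_{2k}} w_{n_1}\cdots w_{n_{2k}}$.

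I do not expect a serious obstacle here: this is the Dirichlet-polynomial analogue of the orthogonality relations for exponential sums. The only points needing care are that $(m/n)^{it}$ must be read through a fixed branch of the logarithm (so that the ``off-diagonal'' frequencies $\log(m/n)$ are genuinely nonzero), and that replacing the pointwise bounds $O_{m,n}(1)$ by a single $o_{T\to\infty}(1)$ term is legitimate only because the sum is finite — which is exactly why the statement records that the implied constant depends on $(w_n)$, as it deteriorates with $N$ and with the size of the weights.
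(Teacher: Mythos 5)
Your argument is correct and is exactly the standard orthogonality computation for Dirichlet polynomials; the paper states Lemma~\ref{Dirbasic1} as background in Section~2 without supplying a proof, so there is nothing in the text to compare against beyond confirming that this is the intended (and only sensible) route. One caveat worth recording: your first assertion is fine for $m/n<0$ with the principal branch, but in the second assertion you use $\overline{n^{it}}=n^{-it}$, which is valid only for $n>0$. For $n<0$ one has $\overline{n^{it}}=n^{-it}e^{-2\pi t}$, and in fact the stated $2k$-th moment formula itself would fail if negative $n$ appeared in the sum: every off-diagonal and diagonal term containing a negative $n_i$ acquires an exponentially decaying factor, so the limit would miss part of the claimed weighted count. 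This is an imprecision inherited from the paper's phrasing ``for any $m,n\in\mathbb Q$''; in every application in the paper the relevant quantities $n$ are positive, and for positive reals your expansion, the termwise integration (legitimate since the sum is finite), and the $O(1/T)$ bookkeeping all go through exactly as you wrote.
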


In particular, this leads to the following result which gives a connection between Dirichlet polynomials and the weighted multiplicative energy of $A$.

\begin{Corollary} \label{Dirbasic2}
Let $A$ be a finite set of rationals and let $w=\{w_a : a \in A \}$ be a set of weights on $A$. Then \[E_{k,w}(A)=\lim_{T\to\infty}\frac{1}{T}\int_0^T\left|\sum_{a\in A}w_a a^{it}\right|^{2k}dt.\]

\end{Corollary}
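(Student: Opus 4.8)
The plan is to obtain Corollary~\ref{Dirbasic2} directly from Lemma~\ref{Dirbasic1} by extending the weights $w=\{w_a:a\in A\}$ to a finitely supported nonnegative sequence (set $w_n=0$ for $n\notin A$), so that $\sum_{a\in A}w_a a^{it}=\sum_n w_n n^{it}$ is a Dirichlet polynomial of exactly the kind treated there. The asserted identity is then the \qu{Consequently} clause of Lemma~\ref{Dirbasic1} together with the definition of weighted multiplicative energy, so the only work is to make the dictionary between the two sides explicit.

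First I would record the expansion underlying Lemma~\ref{Dirbasic1} in the present notation, since it makes the identification with the energy transparent. As the $w_a$ are real and $\overline{a^{it}}=a^{-it}$,
$$\left|\sum_{a\in A}w_a a^{it}\right|^{2k}=\left(\sum_{a\in A}w_a a^{it}\right)^{k}\left(\sum_{a\in A}w_a a^{-it}\right)^{k}=\sum_{a_1,\dots,a_{2k}\in A}w_{a_1}\cdots w_{a_{2k}}\left(\frac{a_1\cdots a_k}{a_{k+1}\cdots a_{2k}}\right)^{it},$$
a sum of exactly $|A|^{2k}$ terms, a quantity independent of $T$. Integrating over $[0,T]$, dividing by $T$, and exchanging the finite sum with the integral, Lemma~\ref{Dirbasic1} applied with $m=a_1\cdots a_k$ and $n=a_{k+1}\cdots a_{2k}$ shows that the $(a_1,\dots,a_{2k})$-term contributes $w_{a_1}\cdots w_{a_{2k}}$ when $a_1\cdots a_k=a_{k+1}\cdots a_{2k}$ and $O_{a_1,\dots,a_{2k}}(1/T)$ otherwise. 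Summing the $O(1/T)$ contributions over the finitely many tuples and letting $T\to\infty$, those error terms vanish and
$$\lim_{T\to\infty}\frac{1}{T}\int_0^T\left|\sum_{a\in A}w_a a^{it}\right|^{2k}dt=\sum_{\substack{a_1,\dots,a_{2k}\in A\\ a_1\cdots a_k=a_{k+1}\cdots a_{2k}}}w_{a_1}\cdots w_{a_{2k}}=E_{k,w}(A)$$
by the definition of the weighted multiplicative energy recalled in Section~\ref{weights}.

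There is no substantive obstacle here: the two points needing attention are that $a^{it}$ must be read exactly as in Lemma~\ref{Dirbasic1} (so that conjugation simply replaces $it$ by $-it$, with $|a^{it}|=1$), and that interchanging the sum, the integral, and the limit is legitimate — which it is, because the number of terms is finite and fixed, so the off-diagonal error is automatically uniform in $T$. In effect Corollary~\ref{Dirbasic2} is a pure restatement of Lemma~\ref{Dirbasic1} in the language of energies, and carries no independent difficulty.
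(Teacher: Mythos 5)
Your proof is correct and is precisely the argument the paper intends: Corollary~\ref{Dirbasic2} is stated without proof as an immediate consequence of the ``Consequently'' clause of Lemma~\ref{Dirbasic1}, and your expansion of $\left|\sum_{a\in A}w_a a^{it}\right|^{2k}$ into a finite sum over $2k$-tuples, followed by termwise application of the orthogonality relation and the observation that the off-diagonal $O(1/T)$ errors vanish because there are only finitely many of them, is exactly that deduction made explicit.
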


We will be interested in the following sorts of Dirichlet polynomials, and only interested at purely imaginary values. For $p$ a prime and a rational number $x$, let $v_p(x)$ denote the $p$-adic valuation of $x$, that is, the power of $p$ when $x$ is expressed in its reduced form as a product of primes:
$$x=\prod_{p} p^{v_p(x)}.$$  
For any $u \in \mathbb Q$, write $\cF_{p,j,u}$ for the set of those Dirichlet polynomials of the form 
\[f_j(t)=\sum_{n \in \mathbb Q:v_p(n)=j}w_n(n+u)^{it}.\]

\section{Weighted multiplicative energy} \label{weights}
A key notion in this paper is that of the \emph{multiplicative energy of order $k$} of a set $A$, which is defined as 
\[
E_k(A) = \sum_{n} \Gamma_{k, A}^2(n),
\]
where $\Gamma_{k,A}(n)=|\{(a_1,\dots,a_k) \in A^k : a_1\cdots a_k = n \}$. Alternatively, this is the number of solutions to the equation
\[a_1\cdots a_k=a_{k+1} \cdots a_{2k}. \]
Good upper bounds for $E_k(A)$ lead to good lower bounds for $|A^k|$, because of the following inequality which is a standard application of the Cauchy-Schwarz inequality;
\[E_k(A) |A^{(k)}| \geq |A|^{2k} .\]

It turns out that a certain weighted version of $E_k(A)$ is more robust for applications. Let $A$ be a finite set of size $N$ with elements $a_1, \ldots, a_N$ (the actual ordering is not important). Next, let $w= \{w_1, \ldots, w_{N} \}$ be a set of non-negative real numbers. One can think of $w_i$ as the weight attached to the element $a_i$. Then define the multiplicative energy of order $k$ with weights $w_i$ as 
\[
E_{k, w}(A) := \sum w_{i_1} \ldots w_{i_{2k}},
\]
where the summation is taken over all $2k$-tuples $(i_1, \ldots, i_{2k})$ such that
\[
a_{i_1}\ldots a_{i_k} = a_{i_{k+1}}\ldots a_{i_{2k}}.
\]
Finally, define $\Lambda_k(A)$ as
\[
\Lambda_k(A) := \max_{w} E_{k, w}(A)^{1/k},
\]
where the maximum is taken over all weights $w$ such that 
\begin{equation} \label{eq:weightsnorm}
\sum^{N}_{i=1} w_i^2 = 1.
\end{equation}
It is well-defined by compactness. 
An attentive reader will notice that our definition of $\Lambda_k$ is a straightforward `multiplicative' adaptation of the $\Lambda$-constants widely used in harmonic analysis. 

The use of $\Lambda$-constants for sum-product type problems was pioneered by Chang and Bourgain (see \cite{chang2003erdHos}, \cite{bourgain2004size} and references therein). 
The present note is largely based on the ideas of \cite{chang2003erdHos}. In particular, Section~\ref{sec:integer} is an adaptation of the technique used in \cite{chang2003erdHos} to the multiplicative setting.

Now we record some properties of $\Lambda_k(A)$ in order to justify such a quantity. Let $\|\cdot \|_{2k}$ be the standard norm in $L_{2k}[0, T]$, normalised such that $\| 1\|_{2k} = 1$. So,
$$\| f \|_{2k} := \left( \frac{1}{T} \int_0^T|f(t)|^{2k} dt \right)^{1/2k}. $$

\begin{Lemma} \label{lm:anyweights}
 Let $A$ be a finite set with some non-negative real numbers $w_a$ assigned to each element $a \in A$. Then
 \begin{equation} \label{eq:lambdaanyweights}
 \left \| \sum_{a \in A} w_a a^{it} \right\|^2_{2k} \leq \Lambda_k(A) \left(\sum_{a \in A} w_a^2 \right) + o_{T\to\infty}(1).
 \end{equation}
\end{Lemma}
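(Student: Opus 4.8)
The plan is to reduce everything to the normalised case $\sum_{a\in A}w_a^2=1$ by homogeneity, and then to invoke Lemma~\ref{Dirbasic1} (equivalently Corollary~\ref{Dirbasic2}) together with the very definition of $\Lambda_k(A)$ as a maximum over unit-norm weights. There is no deep obstacle: the content of the lemma is already packaged into Lemma~\ref{Dirbasic1} and the definition of $\Lambda_k$, and what remains is bookkeeping.

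First I would dispose of the trivial case: if all $w_a$ vanish, the left-hand side of \eqref{eq:lambdaanyweights} is $0$ and the claim holds. Otherwise set $W^2:=\sum_{a\in A}w_a^2>0$ and put $\tilde w_a:=w_a/W$, so that $\sum_{a\in A}\tilde w_a^2=1$. Since the Dirichlet polynomial depends linearly on the weights, $\sum_{a}w_a a^{it}=W\sum_{a}\tilde w_a a^{it}$, and hence
\[\left\|\sum_{a\in A}w_a a^{it}\right\|_{2k}^{2k}=W^{2k}\left\|\sum_{a\in A}\tilde w_a a^{it}\right\|_{2k}^{2k},\qquad\text{so}\qquad \left\|\sum_{a\in A}w_a a^{it}\right\|_{2k}^{2}=W^{2}\left\|\sum_{a\in A}\tilde w_a a^{it}\right\|_{2k}^{2}.\]

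Next, applying Lemma~\ref{Dirbasic1} to the weight sequence $\tilde w$ gives
\[\left\|\sum_{a\in A}\tilde w_a a^{it}\right\|_{2k}^{2k}=\frac1T\int_0^T\left|\sum_{a\in A}\tilde w_a a^{it}\right|^{2k}dt=E_{k,\tilde w}(A)+o_{T\to\infty}(1),\]
and by the definition of $\Lambda_k(A)$ as the maximum of $E_{k,w}(A)^{1/k}$ over all weights with $\sum_i w_i^2=1$, we have $E_{k,\tilde w}(A)\leq \Lambda_k(A)^k$. Therefore $\left\|\sum_{a}\tilde w_a a^{it}\right\|_{2k}^{2k}\leq \Lambda_k(A)^k+o_{T\to\infty}(1)$. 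Taking $k$-th roots and using the elementary subadditivity $(x+y)^{1/k}\leq x^{1/k}+y^{1/k}$ for $x,y\geq0$ and $k\geq1$ (concavity of $t\mapsto t^{1/k}$), we obtain $\left\|\sum_{a}\tilde w_a a^{it}\right\|_{2k}^{2}\leq \Lambda_k(A)+o_{T\to\infty}(1)$. Multiplying by $W^2$, and noting that $W^2$ is a constant depending only on the sequence $w$ so that $W^2\cdot o_{T\to\infty}(1)$ is still $o_{T\to\infty}(1)$, yields $\left\|\sum_{a}w_a a^{it}\right\|_{2k}^{2}\leq \Lambda_k(A)\big(\sum_a w_a^2\big)+o_{T\to\infty}(1)$, which is exactly \eqref{eq:lambdaanyweights}.

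The only points requiring a moment of care are the homogeneity scaling of the normalised $L_{2k}$-norm and the passage from the $2k$-th power to the square: one must check that the error term survives taking $k$-th roots, which it does by the subadditivity of $t\mapsto t^{1/k}$. Beyond that, the argument is immediate from the tools already established.
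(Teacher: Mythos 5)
Your proof is correct and follows essentially the same route as the paper: normalize the weights to unit $\ell^2$-norm, apply Lemma~\ref{Dirbasic1} to identify the normalized $L_{2k}$-norm with the weighted energy up to $o(1)$, invoke the definition of $\Lambda_k(A)$, and scale back by homogeneity. You merely spell out the bookkeeping (scaling of the $L_{2k}$-norm, passage from $2k$-th power to square via subadditivity of $t\mapsto t^{1/k}$) that the paper compresses into ``a straightforward consequence of our definition of $\Lambda_k(A)$ and Lemma~\ref{Dirbasic1}.''
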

\begin{proof}
If $\sum_{a \in A} w^2_a = 0$ the claim of the lemma is trivial. Otherwise, define new weights
\[
w'_a := \frac{w_a}{(\sum_{a \in A} w_a^2)^{1/2}}
\]
which satisfy (\ref{eq:weightsnorm}). It thus suffices to show that
\[
\left \| \sum_{a \in A} w'_a a^{it} \right\|^2_{2k} \leq \Lambda_k(A) + o_{T\to\infty}(1),
\]
which is a straightforward consequence of our definition of $\Lambda_k(A)$ and Lemma \ref{Dirbasic1}.
\end{proof}

A crucial corollary is the following stability property of $\Lambda_k$ which does not seem to be available when one works with multiplicative energies.
\begin{Corollary} \label{corr:stability}
Let $A' \subset A$. Then
\[
E^{1/k}_k(A') \leq \Lambda_k(A)|A'|.
\]
In particular,
\[
E_k(A) \leq \Lambda^k_k(A)|A|^k.
\]
\end{Corollary}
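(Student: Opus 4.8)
The plan is to feed indicator weights into Lemma \ref{lm:anyweights}. Given $A' \subset A$, I would define weights $w = \{w_a : a \in A\}$ on the \emph{whole} set $A$ by setting $w_a = 1$ if $a \in A'$ and $w_a = 0$ otherwise. With this choice one has $\sum_{a \in A} w_a^2 = |A'|$, and the Dirichlet polynomial $\sum_{a \in A} w_a a^{it}$ is identically $\sum_{a \in A'} a^{it}$. Applying Lemma \ref{lm:anyweights} to $A$ with these weights then gives
\[
\left\| \sum_{a \in A'} a^{it} \right\|_{2k}^2 \leq \Lambda_k(A)\,|A'| + o_{T\to\infty}(1).
\]
Note it is essential to apply the lemma to $A$ (not to $A'$), since that is what produces $\Lambda_k(A)$ rather than $\Lambda_k(A')$ on the right-hand side.

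Next I would identify the left-hand side with the energy of $A'$ using Lemma \ref{Dirbasic1} (equivalently Corollary \ref{Dirbasic2} with unit weights): the $2k$-th moment
\[
\left\| \sum_{a \in A'} a^{it} \right\|_{2k}^{2k} = \frac{1}{T}\int_0^T \left| \sum_{a \in A'} a^{it}\right|^{2k} dt = E_k(A') + o_{T\to\infty}(1),
\]
since this counts precisely the solutions of $a_1\cdots a_k = a_{k+1}\cdots a_{2k}$ with all $a_i \in A'$. Raising to the power $1/k$ and using continuity of $x \mapsto x^{1/k}$ on $[0,\infty)$, the quantity $\left\| \sum_{a \in A'} a^{it} \right\|_{2k}^2$ converges to $E_k(A')^{1/k}$ as $T \to \infty$. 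Letting $T\to\infty$ in the first displayed inequality therefore yields $E_k(A')^{1/k} \leq \Lambda_k(A)\,|A'|$, which is the first claim. For the ``in particular'' statement I would simply take $A' = A$, obtaining $E_k(A)^{1/k} \leq \Lambda_k(A)\,|A|$, and raise both sides to the $k$-th power.

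There is essentially no serious obstacle here: the entire conceptual content sits in Lemma \ref{lm:anyweights}, which says that $\Lambda_k(A)$ simultaneously controls the weighted energy of every subset of $A$ — precisely the stability feature the authors highlight as unavailable for the bare energies $E_k$. The only point requiring a little care is the order of operations with the error terms: because the $L^{2k}$-norm carries a $2k$-th root, one must pass to the limit $T\to\infty$ (using the continuity of $t\mapsto t^{1/k}$) before, rather than after, extracting roots, and one should dispose of the degenerate case $A'=\emptyset$ separately (where both sides vanish).
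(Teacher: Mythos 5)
Your proposal is correct and matches the paper's proof exactly: the authors also apply Lemma \ref{lm:anyweights} with the indicator weights $w_a=1$ for $a\in A'$ and $w_a=0$ otherwise, then identify the $L^{2k}$-norm with $E_k(A')$ via Corollary \ref{Dirbasic2}. Your extra remarks about limit order and the empty-set case are fine but not needed.
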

\begin{proof}
 Apply Lemma \ref{lm:anyweights} with $w_a = 1$ if $a \in A'$ and $w_a = 0$ otherwise.
\end{proof}

\section{The integer case} \label{sec:integer}

In this section we restrict ourselves to the integer case and prove the following bound.
\begin{Theorem} \label{thm:maininteger}
Let $A$ be a sufficiently large but finite set of positive integers, with the property that $|AA|\leq K|A|$ for some integer $K$. Then, for any $u$ coprime with the elements of $A$, we have
\begin{equation}
\Lambda_k(A+u) \leq (2k^2)^{K}.
\label{Ebound}
\end{equation}
In particular,
\begin{equation}
 |(A+u)^{(k)}| \geq \frac{|A|^k}{(2k^2)^{kK}}.
\label{Cbound}
\end{equation}

\end{Theorem}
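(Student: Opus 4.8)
The plan is to establish the energy bound \eqref{Ebound}; the product-set estimate \eqref{Cbound} then drops out, since Corollary~\ref{corr:stability} gives $E_k(A+u)\le\Lambda_k^k(A+u)\,|A|^k\le(2k^2)^{kK}|A|^k$, and combining this with the Cauchy--Schwarz inequality $E_k(A+u)\,|(A+u)^{(k)}|\ge|A|^{2k}$ recorded in Section~\ref{weights} yields $|(A+u)^{(k)}|\ge|A|^k/(2k^2)^{kK}$. To prove \eqref{Ebound}, it suffices by Lemma~\ref{lm:anyweights} and Corollary~\ref{Dirbasic2} to show that
\[
\left\|\sum_{a\in A}w_a(a+u)^{it}\right\|_{2k}^{2}\le(2k^2)^{K}
\]
for every choice of non-negative weights with $\sum_{a\in A}w_a^2=1$.

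The next step is to invoke Freiman's Lemma: since $A$ consists of positive integers with $|AA|\le K|A|$ and $|A|$ is sufficiently large, there exist primes $p_1,\dots,p_K$ with $A\subseteq\{p_1^{e_1}\cdots p_K^{e_K}:e_i\in\mathbb Z_{\ge0}\}$; for small $|A|$ the bound is trivial, as $\Lambda_k(A+u)\le|A+u|=|A|$ in general. I would then argue by induction on $K$. For the inductive step, put $p=p_K$ and split the Dirichlet polynomial $F(t)=\sum_{a\in A}w_a(a+u)^{it}$ as $F=\sum_{j\ge0}f_j$ with $f_j\in\cF_{p,j,u}$ collecting the terms with $v_p(a)=j$. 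Expanding $\|F\|_{2k}^{2k}=E_{k,w}(A+u)$ as a weighted count of solutions of $(a_1+u)\cdots(a_k+u)=(a_{k+1}+u)\cdots(a_{2k}+u)$ and organising them by the profile $(v_p(a_1),\dots,v_p(a_{2k}))$, one shows, by comparing the $p$-adic and (where necessary) Archimedean sizes of the two sides, that in any such solution the extreme power of $p$ is forced to occur on both sides, so that a matching factor can be removed. Summing over the at most $\binom{2k}{2}\le 2k^2$ positions where this forced coincidence can sit and applying H\"older's inequality reduces the bound to a weighted energy for the set obtained from $A$ by deleting the $p$-part of each element, which is governed by $p_1,\dots,p_{K-1}$ and hence by the inductive hypothesis; this is precisely where the factor $2k^2$ per prime enters.

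The base case $K=1$ carries the real content. Here $A\subseteq\{1,p,p^2,\dots\}$ and one must bound the weighted number of solutions of $(p^{j_1}+u)\cdots(p^{j_k}+u)=(p^{j_{k+1}}+u)\cdots(p^{j_{2k}}+u)$ by $(2k^2)^k$. The key point is that the sequence $(p^{j}+u)_{j\ge0}$ is multiplicatively rigid: the largest factor $p^{J}+u$ occurring in such an equation cannot be matched by a product of smaller factors unless the exponent $J$ recurs, after which H\"older's inequality lowers the order of the energy and a short secondary induction on $k$ finishes the base case.

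I expect the principal difficulty to be exactly this ``collision'' step — verifying, in the multiplicative-of-shifts setting rather than Chang's additive one, that the top power of $p$ must repeat, since products of shifted integers can coincide in ways that neither a pure size comparison nor a single $p$-adic valuation rules out on its own (in the integer case treated here this is still manageable, whereas over $\mathbb Q$ it becomes the main obstacle). A secondary issue is the bookkeeping in the induction: tracking the weights and the combinatorial factors carefully enough to land on precisely $(2k^2)^{K}$. The replacement of trigonometric sums by Dirichlet polynomials (Lemma~\ref{Dirbasic1}) is what makes this multiplicative version of Chang's scheme available at all, and the monotonicity of $\Lambda_k$ under passage to subsets — obtained by extending weights by zero, as in the proof of Corollary~\ref{corr:stability} — is what lets the induction run smoothly even though multiplicative doubling is not subset-monotone.
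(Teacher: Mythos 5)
Your overall plan — deduce \eqref{Cbound} from \eqref{Ebound} via Cauchy--Schwarz and Corollary~\ref{corr:stability}, apply Freiman's Lemma to extract a $K$-prime structure, then run Chang's induction with trigonometric sums replaced by Dirichlet polynomials — is exactly the paper's. However, two of your concrete steps are wrong and would derail a careful execution. First, Freiman's Lemma does not give $A\subseteq\{p_1^{e_1}\cdots p_K^{e_K}:e_i\in\ZZ_{\ge0}\}$. It gives that the multiplicative dimension of $A$ is at most $K$, which by Lemma~\ref{linalg} means each $a\in A$ has the form $a=p_1^{j_1}\cdots p_K^{j_K}n_a$ with $(n_a,p_1\cdots p_K)=1$ and $n_a$ determined by $\jj=(j_1,\dots,j_K)$ — but $n_a$ is typically nontrivial and may involve arbitrarily many further primes. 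Your base case $A\subseteq\{1,p,p^2,\dots\}$ therefore misses the generic situation; the paper's Lemma~\ref{prop:basecaseint} is formulated for $f_j\in\cF_{p,j,u}$ precisely because the base-case equation is $(m_1p^{j_1}+u)\cdots(m_kp^{j_k}+u)=(m_{k+1}p^{j_{k+1}}+u)\cdots(m_{2k}p^{j_{2k}}+u)$ with uncontrolled $p$-coprime multipliers $m_i$.

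Second, your collision step is wrong as described. You propose an Archimedean ``rigidity of $(p^j+u)$'' argument forcing the \emph{largest} exponent to recur ``on both sides,'' and you express doubt that a single $p$-adic valuation suffices. In fact a single $p$-adic valuation is exactly what the paper uses, applied to the \emph{smallest} exponent: subtract $u^k$ from both sides of the equation above and clear denominators; if $j_1$ is strictly smaller than all other $j_i$ then the left-hand side has $v_p$ equal to $j_1$ (its lowest-order term is $m_1u^{k-1}p^{j_1}$, and $(m_1u,p)=1$), while the right-hand side has $v_p\ge j_1+1$ — a contradiction. This shows only that \emph{some} pair of exponents coincides, possibly on the same side of the equation, which is why the paper enumerates $k^2+2\binom{k}{2}=2k^2-k$ positions before invoking H\"older. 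Your ``both sides'' claim is both unproved and inconsistent with your own count $\binom{2k}{2}$, which already includes same-side pairs. The Archimedean comparison you sketch also does not survive the presence of the $m_i$ and is unavailable anyway since $u$ is only assumed coprime to the elements of $A$ and may well be negative. With the $p$-adic observation and the corrected structure in place, the rest of your plan — peel off $|f_j|^2$ by H\"older, induct prime by prime as in Lemma~\ref{mainlemint}, and pass to $A$ via Lemma~\ref{physgen} — does coincide with the paper's argument.
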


The following is analogous to Proposition 6 of Chang \cite{chang2003erdHos}. Here, we have an extra restriction that we are dealing only with positive powers of a given prime $p$. This makes things rather simpler for us, and we can follow the strategy of Chang, making the appropriate minor alterations.
\begin{Lemma} \label{prop:basecaseint}
Let $p$ be a prime and let $\cJ$ be a set of positive integers. Let $u \in \mathbb Z$ such that $(p,u)=1$. Suppose $f_j\in \cF_{p,j,u}$ for $j\in \cJ$. Then for $k\geq 1$,
\begin{equation}
\lim_{T\to \infty}\lr{\frac{1}{T}\int_0^T\left|\sum_j f_j(t)\right|^{2k}dt}^{1/k}\leq 2k^2\sum_j \lim_{T\to \infty}\lr{\frac{1}{T}\int_0^T\left|f_j(t)\right|^{2k}dt}^{1/k}.
\label{basecase}
\end{equation}
\end{Lemma}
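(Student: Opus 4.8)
The plan is to group the Dirichlet polynomials $f_j$ by their $p$-adic valuation and exploit the fact that, after expanding the $2k$-th power, any surviving term corresponds to a solution of a multiplicative equation in which the valuations of the $n+u$ cannot all be distinct. Write $f(t) = \sum_{j \in \cJ} f_j(t)$, so that
\[
\frac{1}{T}\int_0^T |f(t)|^{2k}\,dt = \sum \big(\text{weighted count of solutions to } (n_1+u)\cdots(n_k+u) = (n_{k+1}+u)\cdots(n_{2k}+u)\big) + o_{T\to\infty}(1),
\]
where each $n_i$ has $v_p(n_i) = j_i$ for some $j_i \in \cJ$, by Lemma 2.1. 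Since $(p,u)=1$ and $n_i$ is a positive integer, one has $v_p(n_i + u) = \min(v_p(n_i), v_p(u)) = \min(j_i, 0) = 0$ whenever $j_i > 0$; but the relevant arithmetic is on the products $\prod (n_i + u)$, so instead I track $v_p$ of the whole equation. Actually the cleaner route, following Chang's Proposition 6, is: among the multiset $\{j_1, \dots, j_{2k}\}$ of valuations, the largest value must be attained at least twice (once on each side, or twice on one side), because otherwise the unique term of that valuation on one side cannot be matched. This gives a dichotomy on the index with maximal valuation.

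The key steps, in order, are: (1) expand the $2k$-th power via Lemma 2.1 and reduce to counting weighted solutions; (2) sort the $2k$ valuations and observe the maximum is attained $\geq 2$ times; (3) split the sum over solutions according to which pair of indices realizes the maximal valuation, and apply the triangle inequality in $L^{2k}$ together with Hölder to peel off the contribution of a single $f_j$; (4) the combinatorial factor from the number of ways to choose the repeated pair, together with the Hölder exponents, produces the constant $2k^2$; (5) iterate, or rather set up the argument so that after removing the top valuation one is left with the same type of sum over $\cJ \setminus \{j_{\max}\}$, and sum a geometric-type series — more precisely, the bound
\[
\Big\| \sum_{j \in \cJ} f_j \Big\|_{2k}^2 \leq 2k^2 \sum_{j \in \cJ} \|f_j\|_{2k}^2 + o_{T\to\infty}(1)
\]
should drop out by an induction on $|\cJ|$ where the inductive step uses the triangle inequality $\|f_{j_{\max}} + g\|_{2k}^2 \leq (\|f_{j_{\max}}\|_{2k} + \|g\|_{2k})^2$ combined with the structural observation that cross terms between $f_{j_{\max}}$ and $g = \sum_{j \neq j_{\max}} f_j$ of low "mixed" order vanish or are controlled.

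The main obstacle I expect is step (3)–(4): making the bookkeeping of Hölder exponents precise so that the loss is exactly $2k^2$ and not something larger, and in particular handling the cross terms correctly. When one expands $|f_{j_{\max}} + g|^{2k}$ one gets terms $\binom{2k}{\ell} f_{j_{\max}}^{\ell} \bar{f}_{j_{\max}}^{m} \cdots$ with $\ell + m < 2k$, and one must argue — using the valuation dichotomy — that such a term, integrated, is bounded by a product of $\|f_{j_{\max}}\|_{2k}$ and $\|g\|_{2k}$ with the right powers, via Hölder's inequality with carefully chosen exponents; the "$\geq 2$ occurrences of the max" fact is exactly what forces $\ell + m \geq 2$, preventing the purely-$g$ term from interfering and allowing the induction to close. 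Summing the resulting recursion $x_n \leq 2k^2(x_{n-1} + y_n)$-type inequality over the elements of $\cJ$ then yields the claimed bound; I would double check the constant by testing against the extremal configuration where all weights are supported on a single valuation class, which should be the tight case for the factor $2k^2$.
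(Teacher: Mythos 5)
Your outline diverges from the paper's proof at the combinatorial crux, and the divergence is a real error, not a cosmetic one. You assert that among the valuations $j_1,\dots,j_{2k}$ the \emph{largest} must occur at least twice (``otherwise the unique term of that valuation on one side cannot be matched''). This is false. Take $p=2$, $u=1$, $k=2$, $(j_1,j_2)=(1,2)$ on the left and $(j_3,j_4)=(1,3)$ on the right, with $m_1=m_2=1$, $m_3=3$, $m_4=1/7$: then $(2\cdot 1+1)(4\cdot 1+1)=15=(2\cdot 3+1)\left(8\cdot\tfrac{1}{7}+1\right)$, every $m_i$ is coprime to $2$, and the maximum valuation $j_4=3$ occurs exactly once. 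What the paper actually shows is that the $j_i$ cannot all be \emph{distinct}: if they were, the unique \emph{smallest} exponent, say $j_1$, would yield --- after expanding, subtracting $u^k$ from both sides, and clearing denominators --- a single term of $p$-adic valuation exactly $j_1$ against everything else of valuation at least $j_1+1$, a contradiction. The conclusion is only that \emph{some} pair $j_i=j_{i'}$ coincides (in the example above it is $j_1=j_3=1$, the minimum), and the subsequent case split is by the \emph{position} of a coinciding pair, not by the maximum.

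The second, structurally independent, gap is in how you propose to extract the constant $2k^2$. The route via the triangle inequality $\|f_{j_{\max}}+g\|_{2k}^2\le(\|f_{j_{\max}}\|_{2k}+\|g\|_{2k})^2$ together with induction on $|\cJ|$ cannot give a bound independent of $|\cJ|$: the triangle inequality alone gives $\bigl(\sum_j\|f_j\|_{2k}\bigr)^2\le|\cJ|\sum_j\|f_j\|_{2k}^2$, and no inductive step repairs this --- if $\|g\|_{2k}^2\le 2k^2\sum_{j\neq j_{\max}}\|f_j\|_{2k}^2$ by hypothesis, you would still need $2\|f_{j_{\max}}\|_{2k}\|g\|_{2k}\le(2k^2-1)\|f_{j_{\max}}\|_{2k}^2$, which fails whenever $\|f_{j_{\max}}\|_{2k}$ is small relative to $\|g\|_{2k}$. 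Your proposed recursion $x_n\le 2k^2(x_{n-1}+y_n)$ likewise compounds to $(2k^2)^{|\cJ|}$. The paper runs no induction on $|\cJ|$ at all. It counts the $k^2+2\binom{k}{2}=2k^2-k$ positions for a coinciding pair, rewrites each case's contribution as $\sum_j\frac{1}{T}\int_0^T|f_j|^2\bigl|\sum_j f_j\bigr|^{2(k-1)}dt$, applies H\"older with exponents $k$ and $k/(k-1)$ to bound this by $\sum_j\|f_j\|_{2k}^2\cdot\bigl\|\sum_j f_j\bigr\|_{2k}^{2(k-1)}$, and then divides both sides of the resulting inequality by $\bigl\|\sum_j f_j\bigr\|_{2k}^{2(k-1)}$. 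That self-improving step is the engine of the lemma and is missing from your plan.
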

\begin{proof}
Expanding the $k$'th power of the left hand side of \eqref{basecase} gives
\begin{equation}\sum_{j_1,\dots,j_{2k}} \lim_{T\to \infty}  \frac{1}{T}\int_0^T f_{j_1}(t) \cdots f_{j_k}(t) \bar{f_{j_{k+1}}(t)} \cdots \bar{f_{j_{2k}}(t)} dt
\label{basiceq1}
\end{equation}
For fixed $j_1, \dots, j_{2k}$, the quantity
\[\lim_{T\to \infty} \frac{1}{T} \int_0^T f_{j_1}(t) \cdots f_{j_k}(t) \bar{f_{j_{k+1}}(t)} \cdots \bar{f_{j_{2k}}(t)} dt \]
gives a weighted count of the number of solutions to equations of the form
\begin{equation}
(m_1p^{j_1}+u)\cdots(m_kp^{j_k}+u)=(m_{k+1}p^{j_{k+1}}+u)\cdots(m_{2k}p^{j_{2k}}+u),
\label{basiceq}
\end{equation}
with each of the $m_i \in \mathbb Q$ coprime to $p$. We claim that there are no solutions to \eqref{basiceq} if all of the $j_i$ are distinct. Indeed, suppose that all of these powers are distinct, and in particular there is a unique smallest power, say $j_1 < j_2,j_3,\dots,j_{2k}$. Then we get (expanding the brackets, subtracting $u^k$ from both sides, and multiplying out all denominators of the $m_i$) an equation 
\begin{equation}
Mp^{j_1}+N_1p^{j_1+1} = N_2p^{j_1+1}
\end{equation}
for some integers $M,N_1$ and $N_2$, such that $(M,p)=1$. This is a contradiction, since the right hand side is divisible by $p^{j_1+1}$ and the left hand side is not. Therefore, there are no contributions to \eqref{basiceq1} coming from those terms where all of the $j_i$ are distinct.

It remains to consider the cases where two or more of the powers $j_1, \dots, j_{2k}$ are the same. There are three kinds of ways in which this can happen.

\begin{enumerate}
\item $j_i=j_i'$ with $1 \leq i \leq k$ and $k+1 \leq i' \leq 2k$. There are $k^2$ possible positions for such a pair $(i,i')$,
\item $j_i=j_i'$ with $1 \leq i, i' \leq k$. There are $\binom{k}{2}$ possible positions for such a pair $(i,i')$,
\item $j_i=j_i'$ with $k+1 \leq i, i' \leq 2k$. There are $\binom{k}{2}$ possible positions for such a pair $(i,i')$.
\end{enumerate}

Suppose we are in situation (1) above. Specifically, suppose that $j_1=j_{2k}$. The other $k^2-1$ cases can be dealt with by the same argument. Then these terms in \eqref{basiceq1} can be rewritten as
\begin{align*} & \sum_{j_1} \lim_{T\to \infty} \frac{1}{T} \int_0^T  f_{j_1}(t) \bar{f_{j_{1}}(t)} \sum_{j_2,\dots, j_{2k-1}} f_{j_2}(t) \cdots f_{j_k}(t)  \bar{f_{j_{k+1}}(t)} \cdots \bar{f_{j_{2k-1}}(t)} dt 
\\& = \sum_j \lim_{T\to \infty}\frac{1}{T}\int_0^T  |f_j(t)|^2\left|\sum_{j} f_j(t)\right|^{2(k-1)} dt.
\end{align*}

Suppose we are in situation (2). Specifically, suppose that $j_1=j_{2}$. The other $ \binom{k}{2}$ cases can be dealt with by the same argument. Then these terms in \eqref{basiceq1} can be rewritten as

\begin{align*} & \sum_{j_1} \lim_{T\to \infty} \frac{1}{T} \int_0^T  f_{j_1}^2(t)  \sum_{j_3,\dots, j_{2k}} f_{j_3}(t) \cdots f_{j_k}(t)  \bar{f_{j_{k+1}}(t)} \cdots \bar{f_{j_{2k}}(t)} dt 
\\& \leq  \sum_j \lim_{T\to \infty}\frac{1}{T}\int_0^T  |f_j(t)|^2\left|\sum_{j} f_j(t)\right|^{k-2} \left|\sum_{j} \bar{f_j(t)}\right|^{k}  dt
\\&= \sum_j \lim_{T\to \infty}\frac{1}{T}\int_0^T  |f_j(t)|^2\left|\sum_{j} f_j(t)\right|^{2(k-1)}   dt.
\end{align*}

The same argument also works in case (3). Returning to \eqref{basiceq1}, we then have
 \begin{align*} & \lim_{T\to \infty}\lr{\frac{1}{T}\int_0^T\left|\sum_j f_j(t)\right|^{2k}dt}
 \\&\sum_{j_1,\dots,j_{2k}} \lim_{T\to \infty}  \frac{1}{T}\int_0^T f_{j_1}(t) \cdots f_{j_k}(t) \bar{f_{j_{k+1}}(t)} \cdots \bar{f_{j_{2k}}(t)} dt
 \\& \leq \left(k^2 + 2\binom{k}{2}\right) \sum_j \lim_{T\to \infty}\frac{1}{T}\int_0^T  |f_j(t)|^2\left|\sum_{j} f_j(t)\right|^{2(k-1)}   dt
  \\& \leq 2k^2 \sum_j \lim_{T\to \infty}\frac{1}{T}\int_0^T  |f_j(t)|^2\left|\sum_{j} f_j(t)\right|^{2(k-1)}   dt.
\end{align*}
Here we have used the fact that the weights are positive and real, which allows us to overcount solutions which may occur in more than one of the three cases above. Finally, an application of H\"{o}lder's inequality gives
\[ \lim_{T\to \infty}\lr{\frac{1}{T}\int_0^T\left|\sum_j f_j(t)\right|^{2k}dt} \leq 2k^2 \sum_j  \lim_{T\to \infty} \lr{\frac{1}{T}\int_0^T|f_j(t)|^{2k}dt}^{1/k}\lr{\frac{1}{T}\int_0^T\left|\sum_{j}f_j(t)\right|^{2k}dt}^{1-1/k}. \]
A rearrangement of this inequality completes the proof.

\end{proof}

Using an inductive argument, we can extend this result to handle generalized geometric progressions. To that end, for a vector $\pp=(p_1,\ldots,p_t)$ of prime numbers and a vector $\jj=(j_1,\ldots,j_t)$ of non-negative integers, let $\cF_{\pp,\jj, u}$ be the set of Dirichlet polynomials of the form
\[
f_\jj(t)=\sum_{n:v_{p_i}(n)=j_i}w_n(n+u)^{it}.
\]

\begin{Lemma} \label{mainlemint}
Let $\pp=(p_1,\ldots,p_K)$ be a vector of prime numbers and let $\cJ$ be a set of vectors with non-negative entries. Let $u \in  \mathbb Z$ such that $(u,p_1\cdots p_K)=1$. Suppose $f_\jj\in\cF_{\pp,\jj, u}$ for $\jj\in \cJ$. Then for $k\geq 1$, we have
\begin{equation} \label{eq:mainint}
\lim_{T\to \infty}\lr{\frac{1}{T}\int_0^T\left|\sum_{\jj\in\cJ} f_\jj(t)\right|^{2k}dt}^{1/k}\leq (2k^2)^K\sum_{\jj\in \cJ}\lim_{T\to \infty}\lr{\frac{1}{T}\int_0^T\left|f_\jj(t)\right|^{2k}dt}^{1/k}.
\end{equation}
\end{Lemma}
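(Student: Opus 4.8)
The plan is to prove Lemma \ref{mainlemint} by induction on $K$, with the base case $K=1$ being exactly Lemma \ref{prop:basecaseint}. So suppose $K \geq 2$ and that the statement holds for $K-1$ primes. The key observation is that a Dirichlet polynomial $f_{\jj} \in \cF_{\pp,\jj,u}$ with $\jj = (j_1,\ldots,j_K)$ can be regarded, by forgetting the last coordinate, as a sum of Dirichlet polynomials in $\cF_{\pp',\jj',u}$ where $\pp' = (p_1,\ldots,p_{K-1})$ and $\jj' = (j_1,\ldots,j_{K-1})$: namely, group the terms of $f_{\jj}$ according to the value of $v_{p_K}(n) = j_K$. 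Conversely, fixing the first $K-1$ coordinates $\jj'$ and letting the last coordinate $j_K$ range, the collection $\{f_{(\jj',j_K)}\}_{j_K}$ is a family in $\cF_{p_K, j_K, u}$ (these are genuinely Dirichlet polynomials whose $n$'s have $v_{p_K}(n) = j_K$, and the coprimality hypothesis $(u,p_1\cdots p_K)=1$ in particular gives $(u,p_K)=1$, so Lemma \ref{prop:basecaseint} applies to this family with prime $p_K$).

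First I would fix, for each $\jj' \in \cJ'$ (the projection of $\cJ$ to the first $K-1$ coordinates), the sub-sum $g_{\jj'}(t) := \sum_{j_K : (\jj',j_K) \in \cJ} f_{(\jj',j_K)}(t)$, so that $\sum_{\jj \in \cJ} f_{\jj}(t) = \sum_{\jj' \in \cJ'} g_{\jj'}(t)$. Then I would apply the inductive hypothesis (for $K-1$ primes) to the family $\{g_{\jj'}\}_{\jj' \in \cJ'}$ — but this requires checking $g_{\jj'} \in \cF_{\pp',\jj',u}$, which holds because $g_{\jj'}$ is a Dirichlet polynomial supported on $n$ with $v_{p_i}(n) = j_i$ for $i \leq K-1$ and non-negative weights. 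This gives
\[
\lim_{T\to\infty}\lr{\frac{1}{T}\int_0^T\left|\sum_{\jj\in\cJ}f_\jj(t)\right|^{2k}dt}^{1/k} \leq (2k^2)^{K-1}\sum_{\jj'\in\cJ'}\lim_{T\to\infty}\lr{\frac{1}{T}\int_0^T\left|g_{\jj'}(t)\right|^{2k}dt}^{1/k}.
\]
Next, for each fixed $\jj'$ I would apply the base case Lemma \ref{prop:basecaseint} with prime $p_K$, shift $u$, and the family $\{f_{(\jj',j_K)}\}_{j_K}$, obtaining
\[
\lim_{T\to\infty}\lr{\frac{1}{T}\int_0^T\left|g_{\jj'}(t)\right|^{2k}dt}^{1/k} \leq 2k^2 \sum_{j_K}\lim_{T\to\infty}\lr{\frac{1}{T}\int_0^T\left|f_{(\jj',j_K)}(t)\right|^{2k}dt}^{1/k}.
\]
Substituting this bound into the previous display and noting $\sum_{\jj'\in\cJ'}\sum_{j_K} = \sum_{\jj\in\cJ}$ collapses everything to the claimed inequality with constant $(2k^2)^{K-1}\cdot 2k^2 = (2k^2)^K$, completing the induction.

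The main thing to be careful about — the only real obstacle — is the bookkeeping that ensures the intermediate objects $g_{\jj'}$ genuinely lie in the function classes $\cF_{\pp',\jj',u}$ and $\cF_{p_K,j_K,u}$ to which we wish to apply the two lemmas; this is where one uses that all weights $w_n$ are non-negative (so that partial sums are again legitimate Dirichlet polynomials with non-negative coefficients and the limits exist) and that the supports are disjoint across different valuation vectors. One should also confirm that the two limits interact correctly — i.e. that the limit defining $\|g_{\jj'}\|_{2k}^2$ exists so that the nested application of limits is valid — which follows from Lemma \ref{Dirbasic1} since each $g_{\jj'}$ is a finite Dirichlet polynomial. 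Beyond that the argument is a clean two-step peeling: strip off the first $K-1$ primes via the inductive hypothesis, then handle the last prime via the base case.
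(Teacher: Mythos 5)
Your proof is correct and follows essentially the same strategy as the paper's: induction on $K$ with Lemma \ref{prop:basecaseint} as both the base case and the engine of the inductive step. The only cosmetic difference is the order of application — you peel off the first $K-1$ coordinates with the inductive hypothesis and then handle the last prime with the base case, whereas the paper peels off the last coordinate $j_K$ first via Lemma \ref{prop:basecaseint} (regarding $\sum_{\jj'} f_{\jj',j_K}$ as an element of $\cF_{p_K,j_K,u}$) and then invokes induction on the inner sums — but both orderings are valid and yield the same constant $(2k^2)^K$.
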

\begin{proof}
We establish the convention that $f_{\jj}$ is identically zero for all $\jj \notin \cJ$. Therefore, we can complete the sum in \eqref{eq:mainint}, and the aim is to prove that
\[
\lim_{T\to \infty}\lr{\frac{1}{T}\int_0^T\left|\sum_{\jj\in \mathbb Z_{\geq 0}^K} f_\jj(t)\right|^{2k}dt}^{1/k}\leq (2k^2)^K\sum_{\jj\in \mathbb Z_{\geq 0}^K}\lim_{T\to \infty}\lr{\frac{1}{T}\int_0^T\left|f_\jj(t)\right|^{2k}dt}^{1/k}.
\]

We proceed by induction on $K$, the base case $K=1$ being given by Lemma \ref{prop:basecaseint}. Then
\begin{align*}
&\lim_{T\to \infty} \left(\frac{1}{T} \int_0^T \left|\sum_{\jj \in \mathbb Z_{\geq 0}^K }f_\jj(t) \right|^{2k} dt\right)^{1/k}
\\&=\lim_{T\to \infty} \left(\frac{1}{T} \int_0^T \left|\sum_{j_K\in \mathbb Z_{\geq 0}}\left(\sum_{\jj'\in \mathbb Z_{\geq 0}^{K-1} }f_{\jj',j_K}(t) \right) \right|^{2k} dt\right)^{1/k}
\\&\leq 2k^2\sum_{j_K \in \mathbb Z_{\geq 0}} \lim_{T\to \infty} \left(\frac{1}{T} \int_0^T \left|\sum_{\jj'\in \mathbb Z_{\geq 0}^{K-1} }f_{\jj',j_K}(t) \right|^{2k} dt\right)^{1/k}
\\& \leq 2k^2\sum_{j_K \in \mathbb Z_{\geq 0}} (2k^2)^{K-1} \sum_{\jj'\in \mathbb Z_{\geq 0}^{K-1} }\lim_{T\to \infty} \left(\frac{1}{T} \int_0^T \left|f_{\jj',j_K}(t) \right|^{2k} dt\right)^{1/k}
\\&=(2k^2)^K \sum_{\jj\in \mathbb Z_{\geq 0}^K}\lim_{T\to \infty}\lr{\frac{1}{T}\int_0^T\left|f_\jj(t)\right|^{2k}dt}^{1/k}.
\end{align*}
The first inequality above follows from an application of Lemma \ref{prop:basecaseint}, using the fact that 
\[
\sum_{\jj'\in \mathbb Z_{\geq 0}^{K-1} }f_{\jj',j_K}(t)  \in \cF_{p_K,j_K,u}.
\]
The second inequality follows from the induction hypothesis.
\end{proof}

We are now in a strong position to establish results on the physical side, starting with the following lemma.

\begin{Lemma} \label{physgen} 
Let $\pp=(p_1,\ldots,p_K)$ be a vector of prime numbers and let $\cJ \subset \mathbb Z^K$ be a set of vectors with non-negative entries. Let $u \in  \mathbb Z$ such that $(u,p_1\dots p_K)=1$. Suppose that 
$$A=\bigcup_{\jj=(j_1,\dots,j_K) \in \cJ} \{p_1^{j_1}\dots p_K^{j_K} x(\jj) \}$$
with each $x(\jj) \in \mathbb Q$ coprime to $p_1\cdots p_K$. Then
$$
\Lambda_k(A+u) \leq (2k^2)^K.
$$
\end{Lemma}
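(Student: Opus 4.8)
The plan is to deduce Lemma~\ref{physgen} from the Dirichlet-polynomial estimate in Lemma~\ref{mainlemint} by translating the $\Lambda_k$-bound into a statement about Dirichlet polynomials via Lemma~\ref{lm:anyweights}. Fix any weights $w=\{w_a : a \in A\}$ with $\sum_{a\in A} w_a^2 = 1$; by the definition of $\Lambda_k$ and Lemma~\ref{lm:anyweights} it suffices to show that
\[
\left\| \sum_{a \in A} w_a (a+u)^{it} \right\|_{2k}^2 \leq (2k^2)^K + o_{T\to\infty}(1).
\]
The point is that the decomposition of $A$ as a union over $\jj \in \cJ$ of the ``fibers'' $\{p_1^{j_1}\cdots p_K^{j_K} x(\jj)\}$ partitions $A$ according to the valuation vector $(v_{p_1}(a),\dots,v_{p_K}(a))$ — here one should note that every element of the fiber indexed by $\jj$ has valuation exactly $j_i$ at $p_i$ since $x(\jj)$ is coprime to $p_1\cdots p_K$ and the $j_i$ are non-negative, so distinct $\jj$ really do give disjoint fibers. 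Setting
\[
f_\jj(t) := \sum_{a \in A : v_{p_i}(a) = j_i \ \forall i} w_a (a+u)^{it},
\]
we have $f_\jj \in \cF_{\pp,\jj,u}$ and $\sum_{a\in A} w_a (a+u)^{it} = \sum_{\jj \in \cJ} f_\jj(t)$, so Lemma~\ref{mainlemint} applies.

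Applying Lemma~\ref{mainlemint} gives
\[
\lim_{T\to\infty}\left\| \sum_{\jj} f_\jj(t) \right\|_{2k}^{2} \leq (2k^2)^K \sum_{\jj\in\cJ} \lim_{T\to\infty}\left( \frac{1}{T}\int_0^T |f_\jj(t)|^{2k}\, dt \right)^{1/k}.
\]
Now I would bound each term $\lim_{T\to\infty}\left( \frac{1}{T}\int_0^T |f_\jj(t)|^{2k}\, dt \right)^{1/k}$. By Corollary~\ref{Dirbasic2} (or Lemma~\ref{Dirbasic1} directly), this limit equals $E_{k,w^{(\jj)}}(A_\jj + u)^{1/k}$ where $A_\jj$ is the fiber and $w^{(\jj)}$ is the restriction of $w$, but the cleanest route is to observe that $A_\jj + u$ is a \emph{Sidon-type} set for multiplication in the relevant sense: since the elements of $A_\jj$ all have the same valuation vector, $a+u$ for $a \in A_\jj$ are distinct rationals, and I claim the only solutions to $(a_{i_1}+u)\cdots(a_{i_k}+u) = (a_{i_{k+1}}+u)\cdots(a_{i_{2k}}+u)$ with all $a_{i_\ell} \in A_\jj$ are the trivial ones (permutations), so that $E_{k,w^{(\jj)}}(A_\jj+u) \le k! \left(\sum_{a \in A_\jj} w_a^2\right)^k$. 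Actually the quick way to see a clean bound: $\left(\frac1T\int|f_\jj|^{2k}\right)^{1/k} \le \left(\frac1T\int|f_\jj|^{2}\right) \cdot (\text{something})$ is false in general, so instead I use that for any single $f_\jj$, Lemma~\ref{prop:basecaseint} with a trivial index set, combined with Hölder, shows $\lim\left(\frac1T\int|f_\jj|^{2k}\right)^{1/k} \le C_k \left(\sum_{a\in A_\jj} w_a^2\right)$; being careful, one wants the constant to work out so that summing over $\jj$ yields exactly $\sum_\jj \sum_{a\in A_\jj} w_a^2 = \sum_{a\in A} w_a^2 = 1$, giving the bound $(2k^2)^K$.

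The main obstacle — and the step deserving the most care — is pinning down the per-fiber bound $\lim_{T\to\infty}\left( \frac{1}{T}\int_0^T |f_\jj(t)|^{2k}\, dt \right)^{1/k} \le \sum_{a \in A_\jj} w_a^2$ (with constant exactly $1$, not $C_k$), since the factor $(2k^2)^K$ in the conclusion has already been fully spent by Lemma~\ref{mainlemint} and cannot absorb any extra loss. This should follow from the fact that within a single fiber all the $p_i$-valuations agree, so after clearing denominators the equation $(a_{i_1}+u)\cdots(a_{i_k}+u) = (a_{i_{k+1}}+u)\cdots(a_{i_{2k}}+u)$ becomes an equation among integers each coprime to $p_1\cdots p_K$ (after pulling out the common factor $\prod p_i^{k j_i}$), reducing to a genuinely multiplicatively-unstructured situation; then $E_{k,w^{(\jj)}}(A_\jj + u) \le \left(\sum_{a\in A_\jj} w_a^2\right)^k$ because the number of non-degenerate representations is controlled — at worst one gets a $k!$ from permutations, which can be absorbed since $\Lambda_k$ is defined with a $1/k$-th power and $k!^{1/k}$ is harmless relative to $2k^2$. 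I would state this per-fiber estimate as the content of an auxiliary observation (essentially that $\Lambda_k$ of a coprime-valuation fiber shifted by a coprime $u$ is $O(1)$, which is the $K=0$ case of the lemma being proved), verify the degeneracy claim directly, and then the induction-free combination above finishes the proof. If getting the constant exactly right is delicate, an alternative is to prove the whole lemma by induction on $K$ mirroring Lemma~\ref{mainlemint}, with the $K=0$ base case being the coprime-fiber estimate; I expect the direct route via Lemma~\ref{mainlemint} to be cleanest provided the base estimate carries constant $1$.
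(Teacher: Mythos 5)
Your overall strategy is correct and matches the paper's: unwind $\Lambda_k$ via Lemma~\ref{lm:anyweights}, decompose the Dirichlet polynomial by valuation vector, and apply Lemma~\ref{mainlemint}. But you have missed the one observation that makes the per-fiber bound trivial, and the workaround you sketch in its place does not actually hold up.

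Read the hypothesis again: $A = \bigcup_{\jj\in\cJ}\{p_1^{j_1}\cdots p_K^{j_K}x(\jj)\}$ with \emph{one} value $x(\jj)$ per index $\jj$. Each ``fiber'' $A_\jj$ is therefore a singleton $\{a_\jj\}$, so $f_\jj(t) = w_{a_\jj}(a_\jj+u)^{it}$ is a single monomial with $|f_\jj(t)| \equiv w_{a_\jj}$, and
\[
\lim_{T\to\infty}\left(\frac{1}{T}\int_0^T|f_\jj(t)|^{2k}\,dt\right)^{1/k} = \left(w_{a_\jj}^{2k}\right)^{1/k} = w_{a_\jj}^2
\]
exactly, with constant $1$ and no limiting argument beyond the obvious. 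Summing over $\jj$ gives $\sum_{a\in A}w_a^2 = 1$, and Lemma~\ref{mainlemint} closes the proof. This is the entire content of the paper's argument.

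By contrast, the Sidon-type claim you propose as the ``cleanest route'' is not valid in the generality you state it: if a fiber $A_\jj$ \emph{did} contain several elements with the same $(p_1,\dots,p_K)$-valuation vector, there is no reason that collisions $(a_{i_1}+u)\cdots(a_{i_k}+u)=(a_{i_{k+1}}+u)\cdots(a_{i_{2k}}+u)$ inside the fiber would be only permutations — the shared valuations at $p_1,\dots,p_K$ say nothing about the remaining primes, where the $x$-parts could easily form, say, a geometric progression. It happens to be vacuously true here only because $|A_\jj|=1$. Moreover, your fallback of ``absorbing a $(k!)^{1/k}$'' would not in fact give the stated bound: the factor $(2k^2)^K$ has been fully consumed by Lemma~\ref{mainlemint}, and any leftover $(k!)^{1/k}>1$ would make the final estimate $(k!)^{1/k}(2k^2)^K$, strictly weaker than what Lemma~\ref{physgen} asserts. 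So the gap is small but real: you need to notice that $|A_\jj|=1$, after which the argument is immediate and all of the Sidon/energy machinery you introduce is unnecessary.
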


\begin{proof}
For each $\jj=(j_1, \dots, j_K) \in \cJ$, define $a_{\jj}=p_1^{j_1}\dots p_K^{j_K} x(\jj)$, so that $A= \{a_{\jj} : \jj \in \cJ \}$. Define $f_\jj(t)=w_\jj(a_\jj+u)^{it}$ for some weights $w=\{w_{\jj} : \jj \in \cJ\}$ satisfying 
$$
\sum_{\jj \in \cJ} w_\jj^2 = 1. 
$$ 
Note that $f_\jj(t)\in\cF_{\pp,\jj,u}$, because of the divisibility conditions in the statement of the lemma. Note also that  
\[
\lim_{T\to \infty} \frac{1}{T}\int_0^T|f_{\jj}(t)|^{2k}dt= w_\jj^{2k}.
\] 
Using this, as well as Theorem \ref{mainlemint} and Corollary~\ref{Dirbasic2}, we conclude that

\begin{align*}
E_{k, w}(A+u)^{1/k} & = \lim_{T\to \infty}\lr{\frac{1}{T}\int_0^T\left|\sum_{\jj\in\cJ} w_\jj(a_\jj +u)^{it}\right|^{2k} dt}^{1/k}
\\&= \lim_{T\to \infty}\lr{\frac{1}{T}\int_0^T\left|\sum_{\jj\in\cJ} f_\jj(t)\right|^{2k}dt}^{1/k}
\\& \leq (2k^2)^K\sum_{\jj\in \cJ}\lim_{T\to \infty}\lr{\frac{1}{T}\int_0^T\left|f_\jj(t)\right|^{2k}dt}^{1/k}
\\&=(2k^2)^K.
\end{align*}
Since the bound above does not depend on the weights, we are done.

\end{proof}

Before completing the proof of Theorem~\ref{thm:maininteger}, we need the following more or less obvious lemma.
\begin{Lemma} \label{linalg}
Let $L$ be an affine subspace of $\FF^l$ of dimension $d$. Then, after relabeling the coordinates if necessary, $L$ has the form
$$L=\{(x_1,\dots,x_d, f_1(x_1,\dots,x_d), \dots, f_{l-d}(x_1,\dots, x_d) : x_1,\dots,x_d \in \FF \},$$
where each of the functions $f_i$ have degree at most $1$ in variables $x_1,\dots,x_d$.

In particular, there are $d$ coordinate projections $\pi_{1},\ldots,\pi_{d}$ such that the map $\pi:\FF^l\to\FF^d$ given by $\pi(\vv)=(\pi_{1}(\vv),\ldots,\pi_{d}(\vv))$ is injective on $L$.
\end{Lemma}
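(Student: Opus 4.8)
The plan is to prove this by straightforward linear algebra, reducing to reduced row echelon form. First I would observe that an affine subspace $L \subset \FF^l$ of dimension $d$ is the solution set of a consistent system of $l-d$ independent linear equations. Writing this system in matrix form $M\vv = \bb$, the matrix $M$ has rank $l-d$, so it has $l-d$ pivot columns and $d$ free columns. After relabeling coordinates so that the free columns come first (call these variables $x_1,\dots,x_d$) and the pivot columns last, Gaussian elimination brings the system to reduced row echelon form, in which each of the pivot variables is expressed as a linear (degree $\leq 1$, i.e.\ affine) function of $x_1,\dots,x_d$ and the constant term. This is exactly the claimed parametrization $L = \{(x_1,\dots,x_d, f_1(x_1,\dots,x_d),\dots,f_{l-d}(x_1,\dots,x_d))\}$.

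For the "In particular" clause, I would simply take $\pi_1,\dots,\pi_d$ to be the projections onto the first $d$ coordinates (the free variables in the above parametrization). Then $\pi(\vv) = (x_1,\dots,x_d)$ reads off the parameters, and since every point of $L$ is uniquely determined by its tuple of parameters $(x_1,\dots,x_d)$ — the remaining coordinates being functions of these — the map $\pi$ is injective on $L$.

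There is no real obstacle here; the only thing to be slightly careful about is the edge cases ($d = 0$, where $L$ is a single point and $\pi$ is the empty projection, or $d = l$, where $L = \FF^l$ and $\pi$ is the identity), and the fact that "degree at most $1$" should be read in the affine sense (constant term allowed), which is consistent with $L$ being affine rather than linear. One could alternatively phrase the argument coordinate-free: pick any $d$ coordinate functionals whose restrictions to the direction space of $L$ form a basis of its dual — such a choice exists because a spanning set of coordinate functionals must contain a basis — and then the corresponding projection $\pi$ is an affine isomorphism from $L$ onto $\FF^d$, whose inverse furnishes the functions $f_i$. Either way the proof is a couple of lines, and I expect the authors' proof to be similarly terse, which is why they call it "more or less obvious."
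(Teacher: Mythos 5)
Your proof is correct and uses essentially the same idea as the paper — Gaussian elimination to reduced row echelon form, with a column permutation to put the free coordinates first. The only cosmetic difference is that you row-reduce the matrix of defining equations (an $(l-d)\times l$ matrix of rank $l-d$ whose null space is the direction space of $L$), while the paper row-reduces a $d\times l$ matrix whose rows form a basis of that direction space; these are dual views of the same computation and yield the same parametrization.
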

\begin{proof}  
By shifting if necessary, we may assume $L$ is a linear subspace without loss of generality. Let $v_1, \ldots, v_d$ be $d$ linearly independent vectors in $L$ which we place as rows in a matrix $M$ of dimension $d \times l$ and rank $d$. Performing row reduction on $M$ and permuting columns if necessary, we get a matrix $M'$ in a row echelon form.
Since the rank of $M'$ is $d$, the left-most minor of $M'$ is a $d \times d$ identity matrix. The rows of $M'$ span $L$, and this is exactly the claim of the lemma.   
\end{proof}

Let $A$ be a set of positive integers and let $\cP= \{p_1,\ldots,p_t\}$ be the set of primes dividing any element of $A$. Abusing notation slightly, we define a map $\cP:A\to\ZZ^t$ where $\cP(a)=(v_{p_1}(a),\ldots,v_{p_t}(a))$. Denoting by $\cP(X)$ the image of a set $X$ under $\cP$, observe that $\cP(AA)=\cP(A)+\cP(A)$. We define the multiplicative dimension of $A$ to be the least dimension of an affine space $L$ containing $\cP(A)$.
\begin{Theorem}[Freiman's Lemma (see Lemma 5.13 in \cite{TaoVu})]
Let $A\subset \RR^m$ be a finite set not contained in a proper affine subspace. Then \[|A+A|\geq (m+1)|A|-O_m(1).\]
\end{Theorem}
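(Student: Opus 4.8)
The plan is to prove this by induction on the dimension $m$, the quantity to beat being $(m+1)|A|-\binom{m+1}{2}$. For the base case $m=1$ this is the classical sumset bound $|A+A|\ge 2|A|-1$: writing $A=\{a_1<\dots<a_n\}$, the sums $a_1+a_1<a_1+a_2<\dots<a_1+a_n<a_2+a_n<\dots<a_n+a_n$ are $2n-1$ distinct reals, and $\binom{2}{2}=1$. Throughout I would freely use that $|A+A|$ is unchanged under invertible affine maps of $\RR^m$, together with the general lower bound $|X+Y|\ge|X|+|Y|-1$ for finite subsets of a torsion-free abelian group (which reduces to the case of $\ZZ$).

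For the inductive step, since $A$ is not contained in a proper affine subspace the polytope $\mathrm{conv}(A)$ is full-dimensional, so it has facets. Fix a facet $F$ with supporting hyperplane $H$, and translate so that $H=\ker\ell$ is a linear subspace and $\mathrm{conv}(A)\subseteq\{\ell\ge 0\}$ for a suitable linear functional $\ell$. Being a facet, $F$ has dimension $m-1$, so its vertices — which all lie in $A$ — affinely span $H$; hence $A_1:=A\cap H$ affinely spans $H\cong\RR^{m-1}$, so $|A_1|\ge m$, and by the inductive hypothesis $|A_1+A_1|\ge m|A_1|-\binom{m}{2}$. Put $A_2:=A\setminus H\subseteq\{\ell>0\}$, which is nonempty because $A$ affinely spans $\RR^m$.

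Next I would bound $|A+A|$ by slicing along $\ell$ (which is additive, so each slice has constant $\ell$-value). The slice at height $0$ contains $A_1+A_1$; for each value $\tau>0$ attained by $\ell$ on $A_2$, with layer $L_\tau:=\{a\in A_2:\ell(a)=\tau\}$, the slice at height $\tau$ contains $A_1+L_\tau$; and the slices above $\max_{a\in A_2}\ell(a)$ contain pieces of $A_2+A_2$ such as $L_{\max}+L_\tau$. Lower-bounding each slice by $|X|+|Y|-1$ — and re-applying the inductive hypothesis to any layer $L_\tau$ that itself affinely spans the hyperplane $\{\ell=\tau\}$, which is what controls the case of a large layer — and then summing a carefully chosen family of pairwise disjoint slices (using $|A_1|\ge m$) should yield $|A+A|\ge(m+1)|A|-\binom{m+1}{2}$. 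In the extremal configuration $A_2$ is a single point, the count collapses to $|A_1+A_1|+|A_1|+1$, and the bound is immediate.

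The main obstacle is precisely this final slice bookkeeping. One must choose the facet (equivalently, the slicing direction) so that the layers of $A_2$ are sufficiently spread out, verify that the slices being added are genuinely disjoint, handle the coincidences that can occur among the sum-heights $\tau_i+\tau_j$, and extract the sharp constant $\binom{m+1}{2}$ rather than an unspecified $O_m(1)$ loss. A non-sharp variant $|A+A|\ge(m+1)|A|-C_m$ follows comparatively easily; obtaining the exact constant is what makes the full argument fiddly, and this is carried out in detail in \cite{TaoVu}.
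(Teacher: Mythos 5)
The paper does not prove Freiman's Lemma at all; it simply states it and cites Lemma 5.13 of \cite{TaoVu}, so there is no in-paper argument for your proposal to be compared against, and the evaluation has to be of your sketch on its own terms.

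Your sketch contains a genuine gap, and it is precisely the ``slice bookkeeping'' you flag at the end --- it is not merely fiddly but, with the tools you allow yourself, cannot close for $m\ge 3$. Inducting on the dimension gives you an extra budget of only $\binom{m+1}{2}-\binom{m}{2}=m$ when passing from $m-1$ to $m$, yet your argument can be forced to spend the inductive loss $\binom{m}{2}$ twice. Take the case where $A_2=A\setminus H$ lies in a single level $\{\ell=\tau\}$ and itself affinely spans that hyperplane. Then $A+A$ sits entirely in the three levels $\{0,\tau,2\tau\}$, carrying exactly $A_1+A_1$, $A_1+A_2$, $A_2+A_2$, and there is nothing else to add. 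Applying your inductive hypothesis at levels $0$ and $2\tau$ and the generic bound $|X+Y|\ge|X|+|Y|-1$ at the middle level gives
\[
|A+A|\ \ge\ m|A_1|-\binom{m}{2}\ +\ \bigl(|A_1|+|A_2|-1\bigr)\ +\ m|A_2|-\binom{m}{2}\ =\ (m+1)|A|-1-2\binom{m}{2},
\]
and $1+2\binom{m}{2}\le\binom{m+1}{2}$ is equivalent to $(m-1)(m-2)\le 0$, i.e.\ $m\le 2$. For $m\ge 3$ you are strictly short and have exhausted $A+A$, so no choice of ``pairwise disjoint slices'' can rescue this; choosing a different facet does not help either, because the same two-parallel-hyperplane configuration arises from any side facet as well. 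The slack has to come from a sharper bound on the cross slice $A_1+A_2$ than $|A_1|+|A_2|-1$, which in turn forces you to strengthen the inductive hypothesis to an asymmetric form such as $|X+Y|\ge|Y|+m|X|-\binom{m+1}{2}$ when $|X|\le|Y|$ and $X+Y$ has affine dimension $m$; the symmetric hypothesis you set up cannot generate this on its own. An alternative route that avoids the issue entirely is to induct on $|A|$ with $m$ fixed: after checking the base case $|A|=m+1$ (where $A+A$ has exactly $\binom{m+2}{2}$ elements), one removes a carefully chosen extreme point $v$ with $A\setminus\{v\}$ still full-dimensional and shows that $v$ contributes at least $m+1$ new sums. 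Either repair gives the sharp constant; as written, your sketch does not.
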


Theorem \ref{thm:maininteger} now becomes a simple corollary.

\begin{proof}[Proof (of Theorem~\ref{thm:maininteger}).]
It follows from Freiman's Lemma that if $|AA|\leq K|A|$ with $|A|$ sufficiently large, then $A$ has multiplicative dimension at most $K$. Thus, there is an affine subspace of $\RR^t$ containing $v(A)$ and of dimension at most $K$. Permuting the coordinates if necessary, it follows from an application of Lemma \ref{linalg} that each $a\in A$ can be written as $a=p_1^{v_{p_1}(a)}\cdots p_K^{v_{p_K}(a)}n_a$ where $n_a$ is not divisible by any $p_i$ with $1\leq i\leq K$ and the vector $(v_{p_1}(a),\ldots,v_{p_K}(a))$ is unique to $a$. This is precisely the situation of Lemma \ref{physgen}. This proves \eqref{Ebound}.

From the Cauchy-Schwarz inequality and Corollary \ref{corr:stability}
\[
|A|^{2k}\leq |(A+u)^{(k)}|E_k(A+u) \leq (2k^2)^{kK}|A|^k|(A+u)^{(k)}|,
\]
whence
\[|A|^{k}\leq (2k^2)^{kK}|(A+u)^{(k)}|.\]
\end{proof}

\section{The rational case - the main lemma}

In this section, we begin to deal with the rational case, which is rather more complicated. The first aim is prove an analogue of Lemma \ref{mainlemint}. We begin to tackle this by proving the following lemma which helps us to decompose $\cJ$ suitably.

\begin{Lemma} \label{lem:split}
Let $\cJ \subset \mathbb Z^K$ and decompose it as $\cJ=\cJ_1 \cup \cdots \cup\cJ_N$. Let $\pp=(p_1, \dots,p_K)$ be a vector of prime numbers and for each $\jj \in \cJ$ let $f_{\jj} \in \mathcal F_{\jj,\pp,1}$. Then
\begin{equation}\label{split}\lim_{T\to \infty}\lr{\frac{1}{T}\int_0^T\left|\sum_{\jj\in\cJ} f_{\jj}(t)\right|^{2k}dt}^{1/k} \leq N\sum_{n=1}^N \lim_{T\to \infty}\lr{\frac{1}{T}\int_0^T\left| \sum_{\jj\in \cJ_n}f_{\jj}(t)\right|^{2k}dt}^{1/k}.
\end{equation}
\end{Lemma}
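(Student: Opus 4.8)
The plan is to reduce the claim to the triangle inequality for the norm $\|\cdot\|_{2k}$ combined with the power-mean (or H\"older) inequality, exploiting the fact that all coefficients are non-negative so that all the limits in question exist and are non-negative reals. Write $g_n(t) = \sum_{\jj \in \cJ_n} f_{\jj}(t)$ and $g(t) = \sum_{\jj \in \cJ} f_{\jj}(t)$. Since $\cJ = \cJ_1 \cup \cdots \cup \cJ_N$, we have $g = \sum_{n=1}^N g_n$ provided the $\cJ_n$ are disjoint; if the decomposition is merely a covering rather than a partition, I would first note that replacing $\cJ_n$ by a disjointification only decreases each right-hand term (because the coefficients $w_n$ are non-negative, so each $\|g_n\|_{2k}$ is monotone in the index set, by Lemma~\ref{lm:anyweights} applied with $0/1$-type modifications, or more directly because a Dirichlet polynomial with fewer non-negative terms has smaller $2k$-norm), so it suffices to treat the partition case.

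First I would record that $\|g\|_{2k}^2 = \lim_{T\to\infty}\left(\frac{1}{T}\int_0^T |g(t)|^{2k}\,dt\right)^{1/k}$, and similarly for each $g_n$, and that all these limits exist by Lemma~\ref{Dirbasic1}/Corollary~\ref{Dirbasic2}. Then the triangle inequality in $L^{2k}[0,T]$ gives, for each $T$, $\left(\frac1T\int_0^T|g|^{2k}\right)^{1/2k} \leq \sum_{n=1}^N \left(\frac1T\int_0^T|g_n|^{2k}\right)^{1/2k}$; letting $T\to\infty$ and squaring yields $\|g\|_{2k}^2 \leq \left(\sum_{n=1}^N \|g_n\|_{2k}\right)^2$. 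Now apply the Cauchy--Schwarz (power-mean) inequality $\left(\sum_{n=1}^N x_n\right)^2 \leq N \sum_{n=1}^N x_n^2$ with $x_n = \|g_n\|_{2k}$, which gives exactly
\[
\|g\|_{2k}^2 \leq N \sum_{n=1}^N \|g_n\|_{2k}^2 = N \sum_{n=1}^N \lim_{T\to\infty}\left(\frac{1}{T}\int_0^T |g_n(t)|^{2k}\,dt\right)^{1/k},
\]
which is the assertion \eqref{split}.

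I do not expect any serious obstacle here; this is essentially a soft functional-analytic estimate. The only mildly delicate points are (i) justifying that the limits exist and may be interchanged with the finite sum and the triangle inequality — this is handled by Lemma~\ref{Dirbasic1}, since each $\frac1T\int_0^T|\cdot|^{2k}$ is, up to $o_{T\to\infty}(1)$, a fixed finite weighted count — and (ii) handling the case where the $\cJ_n$ overlap, which as noted above only helps the inequality because of non-negativity of the weights. A cleaner alternative to (i) is to phrase everything in terms of $\|\cdot\|_{2k}$ as a genuine norm on the (finite-dimensional) span of the relevant exponentials in $L^{2k}[0,T]$ for fixed large $T$, prove the inequality there verbatim, and only pass to the limit at the very end; I would probably present it that way to avoid repeatedly writing limits.
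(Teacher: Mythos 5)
Your proof is correct and uses the same two ingredients as the paper: the triangle inequality for $\|\cdot\|_{2k}$ followed by Cauchy--Schwarz in the form $(\sum_{n=1}^N x_n)^2 \leq N\sum_{n=1}^N x_n^2$, then passing to the limit $T\to\infty$. The extra remark about reducing coverings to partitions (valid because removing terms with non-negative coefficients only decreases the \emph{limiting} $2k$-norm, by Lemma~\ref{Dirbasic1}) is sound and slightly more careful than the paper, which tacitly treats the decomposition as a partition.
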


\begin{proof} 
It suffices to prove the inequality for all sufficiently large $T$, which we assume fixed for now.
Then
\begin{equation} \label{eq:Lknormsum}
\lr{\frac{1}{T}\int_0^T\left|\sum_{\jj\in\cJ} f_{\jj}(t)\right|^{2k}dt}^{1/k} =
 \left(\left\| \sum_{n=1}^N\sum_{\jj \in \cJ_n}  f_{\jj} \right\|_{2k} \right)^2\leq\left(\sum_{n=1}^N\left\| \sum_{\jj \in \cJ_n}  f_{\jj} \right\|_{2k} \right)^2,
\end{equation}
by the triangle inequality. By the Cauchy-Schwarz inequality, (\ref{eq:Lknormsum}) is bounded by 
\begin{equation}
N\sum_{n=1}^N\left\| \sum_{j \in \cJ_n}  f_j \right\|_{2k}^2.
\end{equation}
Letting $T \to \infty$ we get the claim of the lemma.
\end{proof}

We need to introduce some notation which will be convenient for the forthcoming statement and its proof. Let $S \subset \mathbb [K]$ for a fixed positive integer $K$. For a set $\cJ\subset\ZZ^K$, we write $\cJ_S\subset \cJ$ for the set of all vectors in $\cJ$ whose non-negative entries lie exclusively in the positions corresponding to elements of $S$. We will let $\pi_S$ denote the projection onto the coordinates from $S$. Suppose $\jj_S\in \pi_S(\cJ)$, then we define 
\[\cJ_S(\jj_S)=\{\jj\in \cJ:\pi_S(\jj)=\jj_S, \pi_{[K] \setminus S}(\jj) \in \mathbb Z_-^{K-|S|} \}.\]
\begin{Theorem} \label{lemmamainrat}
Let $\pp=(p_1,\ldots,p_K)$ be a vector of prime numbers and let $\cJ \subset \mathbb Z^K$ be a finite set of vectors. Suppose $f_\jj\in\cF_{\pp,\jj, 1}$ for $\jj\in \cJ$. Then for $k\geq 1$, we have
\[\lim_{T\to \infty}\lr{\frac{1}{T}\int_0^T\left|\sum_{\jj\in\cJ} f_\jj(t)\right|^{2k} dt}^{1/k}\leq (4k^2)^K \sum_{S \subset \{1,\dots,K\}} \sum_{\jj_S \in \pi_S(\cJ_S)}\lim_{T\to \infty}\lr{\frac{1}{T}\int_0^T\left|\sum_{\jj \in \cJ_S(\jj_S)}f_{\jj}(t)\right|^{2k} dt}^{1/k}.\]
\end{Theorem}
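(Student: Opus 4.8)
The plan is to induct on $K$, following the template of the integer case (Lemma \ref{mainlemint}) but with the extra bookkeeping forced by the fact that the base case now splits into a ``positive'' and a ``negative'' part. First I would isolate the correct base case $K=1$: given $f_j \in \cF_{p,j,1}$ for $j$ ranging over a finite $\cJ \subset \ZZ$, partition $\cJ = \cJ_{+} \cup \cJ_{-}$ into non-negative and negative indices. Applying Lemma \ref{lem:split} with $N=2$ reduces the $2k$-th moment of $\sum_{j\in\cJ} f_j$ to a sum of the moments of $\sum_{j \in \cJ_+} f_j$ and $\sum_{j \in \cJ_-} f_j$, up to a factor of $2$. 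For the first of these, the Dirichlet polynomials $f_j$ with $j \geq 0$ satisfy exactly the hypotheses of Lemma \ref{prop:basecaseint} (all involved denominators are coprime to $p$, so the ``smallest power of $p$'' argument goes through verbatim), giving a bound by $2k^2 \sum_{j \in \cJ_+}\|f_j\|_{2k}^2$. The negative part is left untouched. Combining, one obtains the $K=1$ instance with constant $4k^2 = (4k^2)^1$, the two terms on the right being exactly the $S = \{1\}$ term (a sum over singletons $\jj_S$) and the $S = \emptyset$ term (the single negative block $\sum_{j\in\cJ_-}f_j$).

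For the inductive step I would split off the last coordinate $p_K$, writing $\sum_{\jj \in \cJ} f_\jj = \sum_{j_K \in \ZZ}\bigl(\sum_{\jj': (\jj',j_K)\in\cJ} f_{\jj',j_K}\bigr)$ and noting that each inner sum lies in $\cF_{p_K, j_K, 1}$. Applying the $K=1$ case in the $p_K$-variable gives a bound of $4k^2$ times the sum over $j_K \geq 0$ of $\|\sum_{\jj'} f_{\jj',j_K}\|_{2k}^2$ plus the single term coming from $\sum_{j_K < 0}\sum_{\jj'} f_{\jj',j_K}$. In the first family, each inner function is a sum of $\cF_{(p_1,\dots,p_{K-1}),\jj',1}$-polynomials indexed by $\jj' \in \ZZ^{K-1}$, so the induction hypothesis in dimension $K-1$ applies and produces the sum over subsets $S' \subset [K-1]$; appending $K$ to $S'$ (since $j_K \geq 0$) gives precisely the subsets $S \subset [K]$ with $K \in S$. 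For the leftover term, $\sum_{j_K<0}\sum_{\jj'} f_{\jj',j_K}$ is again a sum of $\cF_{(p_1,\dots,p_{K-1}),\cdot,1}$-polynomials over the index set $\{\jj \in \cJ : j_K < 0\}$, so applying the dimension-$(K-1)$ hypothesis to it yields the sum over $S' \subset [K-1]$, which now corresponds to the subsets $S \subset [K]$ with $K \notin S$. The two constant factors multiply to $4k^2 \cdot (4k^2)^{K-1} = (4k^2)^K$.

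The main obstacle — and the step needing care — is checking that the index-set bookkeeping exactly matches the definitions of $\cJ_S$, $\pi_S(\cJ_S)$ and $\cJ_S(\jj_S)$ in the statement: one must verify that after peeling off $j_K$ and invoking the $(K-1)$-dimensional statement, the doubly-indexed sum $\sum_{S' \subset [K-1]}\sum_{\jj_{S'}}\|\sum_{\jj \in \cJ'_{S'}(\jj_{S'})} f_\jj\|_{2k}^2$ that comes out reassembles (after the sum over $j_K \geq 0$, resp. the single $j_K<0$ block) into $\sum_{S \ni K}\sum_{\jj_S}\|\sum_{\jj\in\cJ_S(\jj_S)}f_\jj\|_{2k}^2$, resp. $\sum_{S \not\ni K}(\cdots)$, with the convention ``$j_K \geq 0 \iff K \in S$'' being the bridge. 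Here one uses that $(\cJ)_S$ for $\cJ \subset \ZZ^K$ restricted to the last coordinate $\geq 0$ is the same as taking $(\cdot)_{S'}$ of the projection $\pi_{[K-1]}(\cJ \cap \{j_K \geq 0\})$ and then reattaching the $j_K$ coordinate, and similarly the negative block corresponds to $S' = S$. A secondary technical point is that all the H\"older and triangle-inequality steps are being run at fixed large $T$ and then $T \to \infty$; since only finitely many $\jj$ occur, the $o_{T\to\infty}(1)$ error terms are harmless and the limits exist by Lemma \ref{Dirbasic1}. Everything else is a mechanical repetition of the computation in the proof of Lemma \ref{mainlemint}.
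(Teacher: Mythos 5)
Your proof is correct but organized quite differently from the paper's. The paper proves Theorem~\ref{lemmamainrat} directly, in one shot: it partitions $\cJ = \bigcup_{S\subseteq[K]}\cJ_S$ into all $2^K$ sign-pattern classes up front, invokes Lemma~\ref{lem:split} once with $N=2^K$ to pay a factor of $2^K$, and then, for each fixed $S$, regroups $\sum_{\jj\in\cJ_S}f_\jj$ as a sum over $\jj_S\in\pi_S(\cJ_S)$ of polynomials in $\cF_{\pp_S,\jj_S,1}$ with all indices non-negative, so that Lemma~\ref{mainlemint} (the already-proved $|S|$-step induction for the non-negative case) applies directly to give the factor $(2k^2)^{|S|}\leq(2k^2)^K$. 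You instead carry out a fresh $K$-step induction at the level of the final statement, with a hybrid $K=1$ base case (Lemma~\ref{lem:split} with $N=2$ plus Lemma~\ref{prop:basecaseint} on the non-negative part, and $2\leq 4k^2$ absorbing the untouched negative block), then peeling off the $p_K$-coordinate and applying the base case followed by the $(K-1)$-hypothesis to both the $j_K\geq 0$ family and the $j_K<0$ block. Your index-set bookkeeping is what the paper's direct partition sidesteps: a reader has to verify, as you correctly sketch, that $\cJ_S(\jj_S)$ for $K\in S$ is recovered coordinate by coordinate from $\cJ'(j_K)_{S'}(\jj_{S'})$ (and likewise for $K\notin S$ from the negative block), which does go through because the sign-pattern decomposition commutes with fixing the last coordinate. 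The paper's route is shorter and delegates the induction to the already-established Lemma~\ref{mainlemint}; yours is self-contained and makes the recursive structure of the positive/negative split visible, at the cost of heavier notation in the inductive step. One small point worth flagging: Lemma~\ref{prop:basecaseint} is stated for \emph{positive} integer indices, whereas you need it for non-negative $j\geq 0$; its proof (the ``unique smallest power of $p$'' argument) works verbatim with $j_1=0$, and indeed Lemma~\ref{mainlemint} already implicitly uses this, so this is a harmless mismatch in the paper's hypotheses rather than a gap in your argument.
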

\begin{proof}
First we partition \[\cJ=\bigcup_{S\subseteq \{1,\ldots,K\}}\cJ_S.\]
By Lemma \ref{lem:split}, we have 
\[\lim_{T\to \infty}\lr{\frac{1}{T}\int_0^T\left|\sum_{\jj\in\cJ} f_{\jj}(t)\right|^{2k}dt}^{1/k} \leq 2^K\sum_{S\subseteq\{1,\ldots,K\}} \lim_{T\to \infty}\lr{\frac{1}{T}\int_0^T\left| \sum_{\jj\in \cJ_S}f_{\jj}(t)\right|^{2k}dt}^{1/k}.\]

Now consider the set $\pi_S(\cJ_S)$ which consists of vectors in $\ZZ^{|S|}$, each with non-negative entries. Thus, applying Lemma \ref{mainlemint} to these, we get
\[\lim_{T\to \infty}\lr{\frac{1}{T}\int_0^T\left| \sum_{\jj\in \cJ_S}f_{\jj}(t)\right|^{2k}dt}^{1/k}\leq (2k^2)^{|S|} \sum_{\jj_S \in \pi_S(\cJ_S)}\lim_{T\to \infty}\lr{\frac{1}{T}\int_0^T\left|\sum_{\jj \in \cJ_S(\jj_S)}f_{\jj}(t)\right|^{2k} dt}^{1/k}\]
and the theorem follows.

\end{proof}

\section{The rational case - concluding the proof}

This section is devoted to proving the main result of this paper, which we now state in full.

\begin{Theorem} \label{thm:energyrationalcase}
Let $A \subset \mathbb Q$ be a finite set, with the property that $|AA|\leq K|A|$ for some integer $K$. Then we have
\begin{equation}
\Lambda_k(A+1) \leq (8k^4)^{K}.
\label{EboundQ}
\end{equation}
In particular,
\begin{equation}
 |(A+1)^{(k)}| \geq \frac{|A|^k}{(8k^4)^{kK}}.
\label{CboundQ}
\end{equation}
\end{Theorem}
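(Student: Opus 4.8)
The plan is to mirror the integer case proof (Theorem~\ref{thm:maininteger}) but using the more complicated base estimate from Theorem~\ref{lemmamainrat} in place of Lemma~\ref{mainlemint}, and then to control the extra ``negative-power'' terms using additional structural information from Freiman's Lemma. First I would apply Freiman's Lemma exactly as in the integer case: since $|AA| \leq K|A|$, the set $\cP(A) \subset \ZZ^t$ (the prime-exponent vectors) lies in an affine subspace of dimension at most $K$, so by Lemma~\ref{linalg} we may choose $K$ coordinates $p_1,\dots,p_K$ such that each $a \in A$ is determined by its exponent vector $\jj = (v_{p_1}(a),\dots,v_{p_K}(a)) \in \ZZ^K$, with $a = p_1^{j_1}\cdots p_K^{j_K} x(\jj)$ and $x(\jj)$ coprime to $p_1\cdots p_K$. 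Writing $A = \{a_\jj : \jj \in \cJ\}$ and $f_\jj(t) = w_\jj (a_\jj + 1)^{it}$ for weights with $\sum w_\jj^2 = 1$, Corollary~\ref{Dirbasic2} gives $E_{k,w}(A+1)^{1/k} = \lim_T (\frac1T \int_0^T |\sum_\jj f_\jj(t)|^{2k} dt)^{1/k}$, and Theorem~\ref{lemmamainrat} bounds this by
\[
(4k^2)^K \sum_{S \subseteq [K]} \sum_{\jj_S \in \pi_S(\cJ_S)} \lim_{T\to\infty}\lr{\frac1T\int_0^T\left|\sum_{\jj \in \cJ_S(\jj_S)} f_\jj(t)\right|^{2k} dt}^{1/k}.
\]

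The main obstacle is bounding the inner limits, which are weighted energies of the sets $A_S(\jj_S) := \{a_\jj : \jj \in \cJ_S(\jj_S)\}$ — these are exactly the elements of $A$ whose $p_i$-exponents for $i \in S$ are prescribed and whose $p_i$-exponents for $i \notin S$ are strictly negative. I would argue as follows. Fix $S$ and $\jj_S$. Dividing out the common factor $\prod_{i\in S} p_i^{(\jj_S)_i}$ does not change energies, so we may assume the $S$-coordinates are zero; the remaining elements all have strictly negative exponents at the primes $\{p_i : i \notin S\}$. The key trick for negative powers: if $a = \prod_{i \notin S} p_i^{-m_i} \cdot x$ with all $m_i \geq 1$, then $a + 1 = (x + \prod p_i^{m_i})/\prod p_i^{m_i}$, so multiplicatively $a+1$ factors as a ``numerator'' $(x + \prod p_i^{m_i})$ times the fixed-sign denominator $\prod p_i^{-m_i}$. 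Taking inverses (which preserves multiplicative energy) replaces $a+1$ by $\prod p_i^{m_i}/(x + \prod p_i^{m_i})$; this still is not quite an integer shift. Instead I expect the cleaner route is: the reciprocal $1/a$ has \emph{positive} exponents at the $p_i$, and $1/a + 1 = (a+1)/a$, so $(a_{\jj_1}+1)\cdots(a_{\jj_k}+1) = (a_{\jj_{k+1}}+1)\cdots(a_{\jj_{2k}}+1)$ holds iff the corresponding equation for the shifted reciprocals $1/a + 1$ holds with weights adjusted by powers of $a$ — but the $a$'s on the two sides need not match. To handle this properly, I would instead apply Freiman's Lemma a second time, but more carefully: the affine span of $\cP(A)$ has dimension exactly $d \leq K$, and one can show $\cJ_S$ is nonempty for at most $2^d$ sets $S$ that are ``active'', and for the active $S$ the exponent vectors in $\cJ_S$, once the $S$-coordinates are fixed, live in a $(d - |S|)$-dimensional affine space inside $\ZZ_-^{K - |S|}$, which after reflection $\jj \mapsto -\jj$ becomes a set of positive-exponent vectors to which Lemma~\ref{physgen} applies (up to the reciprocal/shift identity above). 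This yields $\Lambda_k(A_S(\jj_S) + 1)^{\text{something}} \leq (2k^2)^{K - |S|}$ or similar, hence $\lim_T(\cdots)^{1/k} \leq (2k^2)^{K-|S|} |\cJ_S(\jj_S)|$ by Corollary~\ref{corr:stability}.

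The remaining step is a counting/bookkeeping argument: summing over $S$ and $\jj_S$, one gets
\[
E_{k,w}(A+1)^{1/k} \leq (4k^2)^K \sum_{S \subseteq [K]} (2k^2)^{K - |S|} \sum_{\jj_S \in \pi_S(\cJ_S)} |\cJ_S(\jj_S)|^{?},
\]
and because the sets $\{\cJ_S(\jj_S) : \jj_S \in \pi_S(\cJ_S)\}$ partition $\cJ_S$ and the $\cJ_S$ overlap in a controlled way, the double sum over all $S$ and $\jj_S$ of $|\cJ_S(\jj_S)|$ telescopes to something like $2^K |A|$ (each $\jj \in \cJ$ contributes to exactly one $\cJ_S(\jj_S)$ for each $S$ containing the set of non-negative coordinates of $\jj$). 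Plugging $\sum w_\jj^2 = 1$ back in and carefully tracking constants — $(4k^2)^K \cdot (2k^2)^K = (8k^4)^K$ is precisely the bound in \eqref{EboundQ} — gives $\Lambda_k(A+1) \leq (8k^4)^K$. Finally \eqref{CboundQ} follows from \eqref{EboundQ} exactly as in the integer case: Cauchy--Schwarz gives $|A|^{2k} \leq |(A+1)^{(k)}| E_k(A+1)$, and Corollary~\ref{corr:stability} gives $E_k(A+1) \leq \Lambda_k(A+1)^k |A|^k \leq (8k^4)^{kK}|A|^k$, whence $|(A+1)^{(k)}| \geq |A|^k / (8k^4)^{kK}$. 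I expect the genuinely delicate part to be making the reciprocal-and-reflection reduction rigorous over $\QQ$ — ensuring the shift by $1$ is preserved (it is, since $1/a + 1 = (a+1)/a$ and inversion is a multiplicative-energy isometry, but the weights acquire factors of $a$ that must be absorbed into the $\Lambda_k$ framework via Lemma~\ref{lm:anyweights}) — together with getting the constant in Theorem~\ref{lemmamainrat}'s application to come out at exactly $(8k^4)^K$ rather than something larger.
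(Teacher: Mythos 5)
Your architecture is right --- Freiman's Lemma and Lemma~\ref{linalg} give the $K$-prime parametrisation $a = p_1^{j_1}\cdots p_K^{j_K}x(\jj)$, Theorem~\ref{lemmamainrat} reduces the problem to bounding the inner energies $E_{k,w}(A_{S,\jj_S}+1)^{1/k}$, and the constant arithmetic $(4k^2)^K\cdot(2k^2)^K=(8k^4)^K$ matches the paper. You also correctly identify the central obstruction: $1/a+1=(a+1)/a$, so passing from a collision of shifted $a$'s to a collision of shifted $a^{-1}$'s introduces factors of $a$ that a priori need not cancel between the two sides. But you do not resolve this; the phrase ``up to the reciprocal/shift identity above'' papers over exactly the step you earlier flagged as failing, and neither the ``second application of Freiman's Lemma'' nor the weight-absorption via Lemma~\ref{lm:anyweights} is developed to the point where it actually closes this gap.

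What the paper proves, and what your proposal is missing, is a Claim of the following form: if $\jj_1,\dots,\jj_{2k}\in\cJ_S(\jj_S)$ and $(a_{\jj_1}+1)\cdots(a_{\jj_k}+1)=(a_{\jj_{k+1}}+1)\cdots(a_{\jj_{2k}}+1)$, then automatically $a_{\jj_1}\cdots a_{\jj_k}=a_{\jj_{k+1}}\cdots a_{\jj_{2k}}$. This is shown in two steps. First, comparing $p_t$-adic valuations for $t\in S'=[K]\setminus S$: since $a_{\jj_i}+1=(\pp_S^{\llll_S}x(\jj_i)+\qq^{-\jj_i'})/\qq^{-\jj_i'}$ has numerator coprime to $p_t$, one gets $v_{p_t}(a_{\jj_i}+1)=v_{p_t}(\qq^{\jj_i'})$, and the collision forces $\jj_1'+\cdots+\jj_k'=\jj_{k+1}'+\cdots+\jj_{2k}'$. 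Second --- and this is where the structural output of Lemma~\ref{linalg} is used in an essential way that your sketch omits --- each $x(\jj)=\prod_{t>K}p_t^{f_t(\jj)}$ with $f_t$ \emph{affine}, so $v_{p_t}$ of the product $x(\jj_1)\cdots x(\jj_k)$ is an affine function of $\jj_1+\cdots+\jj_k$; since the $S$-coordinates are all equal to $\jj_S$, the first step gives $\jj_1+\cdots+\jj_k=\jj_{k+1}+\cdots+\jj_{2k}$, hence $x(\jj_1)\cdots x(\jj_k)=x(\jj_{k+1})\cdots x(\jj_{2k})$ and the Claim follows. Only with this Claim in hand can one legitimately multiply both sides of the collision by the \emph{common} value $\prod a_{\jj_i}^{-1}$ to pass to $E_{k,w}(A_{S,\jj_S}^{-1}+1)$, where the $S'$-exponents are positive and Lemma~\ref{mainlemint} applies with constant $(2k^2)^{K-|S|}$. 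Without the Claim, the passage to reciprocals is not an energy identity, and the bound does not follow.

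A secondary inaccuracy: the sets $\cJ_S$ (non-negative entries \emph{exactly} on $S$, strictly negative off $S$) genuinely \emph{partition} $\cJ$, and within each $\cJ_S$ the classes $\cJ_S(\jj_S)$ partition further, so the final double sum $\sum_S\sum_{\jj_S}\sum_{\jj\in\cJ_S(\jj_S)}w_{a_\jj+1}^2$ equals $\sum_\jj w_{a_\jj+1}^2=1$ exactly. There is no $2^K$ overcount to absorb; the assertion that ``each $\jj$ contributes to one $\cJ_S(\jj_S)$ for each $S$ containing the non-negative coordinates'' misreads the definition and, while it happens not to change the final constant here (since you already used the $2^K$ from Lemma~\ref{lem:split} inside $(4k^2)^K$), it would lead to a wrong bound if tracked literally.
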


\begin{proof}

Suppose that $A \subset \mathbb Q$ and $|AA| \leq K|A|$ and let $\mathcal P=\{p_1,\dots,p_l\}$ be the set of all primes that appear in the prime decomposition of the elements of $A$. So, each element $a \in A$ can be expressed uniquely as $a=p_1^{v_{p_1}(a)}\cdots p_l^{v_{p_l}(a)}$. We have no control over the size of $\mathcal P$, but we do know that $\mathcal P$ is finite.
 
As in the proof of Theorem \ref{thm:maininteger}, we also let $\mathcal P: A \rightarrow \mathbb Z^n$ denote the prime evaluation map defined by $\mathcal P(p_1^{v_{p_1}}(a)\cdots p_n^{v_{p_t}}(a))=(v_{p_1}(a),\dots,v_{p_t}(a))$ and note that $|\mathcal P(A) + \mathcal P(A)|=|AA|$.
 
By Freiman's Lemma $\mathcal P(A)$ has affine dimension at most $K$. Then, by Lemma \ref{linalg}, we have that
$$\mathcal P(A)=\{(j_1,\dots,j_K, f_{K+1}(j_1,\dots,j_K), \dots, f_{l}(j_1,\dots, j_K) : (j_1,\dots,j_K) \in \pi(\mathcal P(A)) \},$$
where $\pi$ in the projection from $\mathbb R^l$ to the first $K$ coordinates, and each $f_i$ is either constant, or linear in variables $j_1,\dots,j_K$.

We then apply the inverse map $\mathcal P^{-1}$ and see that $A$ has the form
$$A=\{p_1^{j_1}\dots p_K^{j_K} x(j_1,\dots,j_K) : (j_1,\dots,j_K) \in \pi(\mathcal P(A)) \} ,$$
where $x(j_1,\dots,j_K)=p_{K+1}^{f_{K+1}(j_1,\dots,j_K)}\cdots p_{t}^{f_{l}(j_1,\dots,j_K)}$. Note that $x$ is coprime to $p_1\cdots p_K$.
Let $\cJ =\pi(\mathcal P(A))$ and for $\jj=(j_1,\dots,j_K) \in \cJ$, let 
$$a_{\jj}=p_1^{j_1}\dots p_K^{j_K} x(\jj).$$

Now suppose $w$ is any weight function defined on $A+1$ and we consider
\[\sum_{a\in A}w_{a+1}(a+1)^{it}= \sum _{\jj \in \cJ} w_{a_{\jj}+1}(a_{\jj} +1)^{it}=\sum_{S\subset\{1,\ldots,K\}}\sum_{\jj_S\in\pi_S(\cJ_S)}\sum_{\jj\in \cJ_S(\jj_S)}w_{a_\jj+1}(a_{\jj}+1)^{it}.\]
By Theorem \ref{lemmamainrat},
\begin{multline}\lim_{T\to\infty}\left(\frac{1}{T}\int_0^T\left|\sum_{a\in A}w_{a+1}(a+1)^{it}\right|^{2k}dt\right)^{1/k}\\
\leq(4k^2)^K\sum_{S\subset\{1,\ldots,K\}}\sum_{\jj_S\in\pi_S(\cJ_S)}\lim_{T\to\infty}\left(\frac{1}{T}\int_0^T\left|\sum_{\jj\in \cJ_S(\jj_S)}w_{a_\jj+1}(a_{\jj}+1)^{it}\right|^{2k}dt\right)^{1/k}. \label{close}
\end{multline}
If we define \[A_{S,\jj_S}=\{a_\jj:\jj\in \cJ_S(\jj_S)\}\] and $w_{S,\jj_S}$ to be the weights $w$ restricted to $A_{S,\jj_S}+1$ then the innermost quantity above is
\[\lim_{T\to\infty}\left(\frac{1}{T}\int_0^T\left|\sum_{\jj\in \cJ_S(\jj_S)}w_{a_\jj+1}(a_{\jj}+1)^{it}\right|^{2k}dt\right)^{1/k}=E_{k,w_{S,\jj_S}}(A_{S,\jj_S}+1)^{1/k}.\]
This energy is a weighted count of solutions to the equation
\begin{equation}\label{collision}(a_{\jj_1}+1)\cdots(a_{\jj_k}+1)=(a_{\jj_{k+1}}+1)\cdots(a_{\jj_{2k}}+1).\end{equation}
Define $ \llll_S$ to be the $K$-dimensional vector obtained from $\jj_S$ by adding zero entries to those positions not corresponding to $S$. We can then write $\jj\in\cJ_S(\jj_S)$ as $\llll_S + \jj'$ where $\jj'$ has negative entries on coordinates from $S'=\{1,\ldots,K\}\setminus S$ and zeros on coordinates in $S$. We let $\pp_S$ denote the vector of primes with indices from $S$ and $\qq$ denote the vector of primes with indices from $S'$. Finally, if we use the notation $\zz^\mm=z_1^{m_1}\cdots z_K^{m_K}$, then for $\jj_i=\llll_S + \jj_i'$ we have
\[a_{\jj_i}=\pp_S^{\llll_S}\qq^{\jj_i'}x(\jj_i).\]
Now we claim the following:
\begin{Claim}
If (\ref{collision}) holds then
\begin{equation}
\jj_1'+\cdots+\jj_k'=\jj_{k+1}'+\cdots+\jj_{2k}'
\label{claim1}
\end{equation}
and
\begin{equation}
x(\jj_1)\cdots x(\jj_k)=x(\jj_{k+1})\cdots x(\jj_{2k}).
\label{claim2}
\end{equation}
In particular
\[a_{\jj_1}\cdots a_{\jj_k}=a_{\jj_{k+1}}\cdots a_{\jj_{2k}}.\]
\end{Claim}
\begin{proof}[Proof of claim]
We begin with \eqref{claim1}. We have
\[a_{\jj_i}+1=\frac{\pp_S^{\llll_S}x(\jj_i)+\qq^{-\jj_i'}}{\qq^{-\jj_i'}}.\] The non-zero entries of $-\jj_i'$ are positive and correspond to indices in $S'$. Furthermore $\pp_S^{\llll_S}x(\jj_i)$ is coprime to $p_t$ for any $t\in S'$. Thus \[v_{p_t}(a_{\jj_i}+1)=v_{p_t}(\qq^{\jj_i'}),\] and from this \eqref{claim1} follows. 

Next, we recall that $x(\jj_i)$ is of the form $p_{K+1}^{f_{K+1}(\jj_i)}\cdots p_l^{f_l(\jj_i)}$ for primes $p_{K+1},\ldots,p_l$ and functions $f_i$ which are linear or constant. Thus 
\[v_{p_t}(x(\jj_1)\cdots x(\jj_k))=f_t(\jj_1)+\cdots+f_t(\jj_k)\]
and
\[v_{p_t}(x(\jj_{k+1})\cdots x(\jj_{2k}))=f_t(\jj_{k+1})+\cdots+f_t(\jj_{2k})\]
for any $t\in\{K+1,\ldots,l\}$.
Writing $f_t=H_t+c_t$ for a linear form $H_t$ and a constant $c_t$ we see
\[v_{p_t}(x(\jj_1)\cdots x(\jj_k))=H_t(\jj_1+\cdots+\jj_k)+kc_t\]
and
\[v_{p_t}(x(\jj_{k+1})\cdots x(\jj_{2k}))=H_t(\jj_{k+1}+\cdots+\jj_{2k})+kc_t.\]
In view of the first part of the claim and the the fact that $\jj_i=\llll_S\oplus \jj_i'$, we have \[\jj_1+\cdots+\jj_k=\jj_{k+1}+\cdots+\jj_{2k}\]
and \eqref{claim2} follows.
\end{proof}

Having established this claim, we can multiply both sides of (\ref{collision}) by \[\prod_{i=1}^{k}a_{\jj_i}^{-1}=\prod_{i=k+1}^{2k}a_{\jj_i}^{-1}\]
to get the equation
\[(1+a_{\jj_1}^{-1})\cdots(1+a_{\jj_k}^{-1})=(1+a_{\jj_{k+1}}^{-1})\cdots(1+a_{\jj_{2k}}^{-1}).\]
Counting such equations with the same weights, we are now evaluating
\[E_{k,w_{S,\jj_S}}(A_{S,\jj_S}^{-1}+1)^{1/k}=\lim_{T\to\infty}\left(\frac{1}{T}\int_0^T\left|\sum_{\jj\in \cJ_S(\jj_S)}w_{a_\jj+1}(a_{\jj}^{-1}+1)^{it}\right|^{2k}dt\right)^{1/k}.\]
Crucially, the powers of primes indexed by $S'$ are now all positive in the above quantity, and we can apply Lemma \ref{mainlemint}. The above is thus at most
\[ (2k^2)^{K-|S|} \sum_{\jj\in \cJ_S(\jj_S)}\lim_{T\to\infty}\left(\frac{1}{T}\int_0^T\left|w_{a_\jj+1}(a_{\jj}^{-1}+1)^{it}\right|^{2k}dt\right)^{1/k}\leq (2k^2)^{K}\sum_{\jj\in \cJ_S(\jj_S)}|w_{a_\jj+1}|^2.\]
Inserting this into \eqref{close}, we conclude that
\begin{align*}\lim_{T\to\infty}\left(\frac{1}{T}\int_0^T\left|\sum_{a\in A}w_{a+1}(a+1)^{it}\right|^{2k}dt\right)^{1/k} &\leq(4k^2)^K\sum_{S\subset\{1,\ldots,K\}}\sum_{\jj_S\in\pi_S(\cJ_S)}(2k^2)^{K}\sum_{\jj\in \cJ_S(\jj_S)}|w_{a_\jj+1}|^2. 
\\&= (8k^4)^K\sum_{\jj\in \cJ}|w_{a_{\jj}+1}|^2.
\end{align*}
Since the inequality above is true for any set of weights on $A+1$, \eqref{EboundQ} then follows from the definition of $\Lambda_k(A+1)$.

To prove \eqref{CboundQ}, we apply the Cauchy-Schwarz inequality and Corollary \ref{corr:stability}, just as we did in the conclusion of the proof of Theorem \ref{thm:maininteger}. This yields
$$|A|^{2k} \leq E_k(A+1) |(A+1)^{(k)}| \leq \Lambda_k^k(A+1)|A|^k |(A+1)^{(k)}| \leq (8k^4)^kK |A|^k |(A+1)^{(k)}|, $$
and a rearrangement of this inequality completes the proof of \eqref{CboundQ}.

\end{proof}

\section{Proof of Corollary ~\ref{corr:height}} \label{sec:corollary}

Recall the statement of Corollary~\ref{corr:height}. 
\begin{UnnumberedCorollary}
Let $p_1, \ldots, p_r$ be a set of primes and $S$ be a set of rational numbers of the form $$
s = p^{\alpha_1}_1 \ldots p^{\alpha_r}_r
$$
with $|\alpha_i| \leq H$. Then for any rational $c_1, c_2 \neq 0$ the number of solutions $(s_1, s_2) \in S \times S$ to
\begin{equation} \label{eq:uniteq}
c_1s_1 + c_2s_2 = 1
\end{equation}
is bounded by $(\log H)^{C2^r}$ with some absolute effective constant $C > 0$.   
\end{UnnumberedCorollary}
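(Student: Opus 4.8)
The plan is to convert solutions of $c_1s_1+c_2s_2=1$ into a multiplicative energy problem for a shift, and to control that energy by applying Theorem~\ref{thm:energyrationalcase} not to the solution set itself but to a conveniently chosen ambient set of small multiplicative doubling, passing between the two via the stability property in Corollary~\ref{corr:stability}.

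First I would set up the dictionary. Let $N$ be the number of solutions; we may assume $N\ge2$. For a solution $(s_1,s_2)$ put $a=a(s_1,s_2):=\frac{c_1s_1}{c_2s_2}$; dividing the equation through by $c_2s_2$ gives $a+1=\frac{1}{c_2s_2}$. If two solutions gave the same value of $a$, then $s_1/s_2$ would be a fixed rational $\rho$, and then $s_2(c_1\rho+c_2)=1$ forces $s_2$ (hence $s_1$), where $c_1\rho+c_2\neq0$ since otherwise $a=-1$, contradicting $a+1=\frac1{c_2s_2}\neq0$. Thus $a(s_1,s_2)$ determines the solution and $A:=\{a(s_1,s_2)\}$ has $|A|=N$. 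Moreover $A\subseteq G:=\frac{c_1}{c_2}\cdot\frac SS$ and $A+1\subseteq V:=\bigl\{\frac1{c_2s}:s\in S\bigr\}$. Since $\frac SS=\{p_1^{\gamma_1}\cdots p_r^{\gamma_r}:|\gamma_i|\le2H\}$ and multiplication by the fixed rational $c_1/c_2$ is injective, $|G|=(4H+1)^r$ and $|GG|=(8H+1)^r\le2^r|G|$. Likewise $V^{(k)}=c_2^{-k}\bigl(\{1/s:s\in S\}\bigr)^{(k)}$ is a dilate of a box of exponent vectors in $[-kH,kH]^r$, so $|V^{(k)}|\le(2kH+1)^r$.

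Next I would apply Theorem~\ref{thm:energyrationalcase} to $G$ with $K=2^r$, obtaining $\Lambda_k(G+1)\le(8k^4)^{2^r}$ for every $k\ge2$. Since $A+1\subseteq G+1$, Corollary~\ref{corr:stability} gives $E_k(A+1)\le\Lambda_k(G+1)^k|A+1|^k\le(8k^4)^{2^rk}N^k$. Plugging this into the Cauchy--Schwarz bound $E_k(A+1)\,\bigl|(A+1)^{(k)}\bigr|\ge|A+1|^{2k}$ and using $(A+1)^{(k)}\subseteq V^{(k)}$ yields $N^k\le(8k^4)^{2^rk}(2kH+1)^r$, that is,
\[
N\le(8k^4)^{2^r}(2kH+1)^{r/k}.
\]
Optimising over the free integer $k$: choosing $k\asymp\log H$ makes $(2kH+1)^{r/k}=(\log H)^{O(r)}$ and $(8k^4)^{2^r}=(\log H)^{O(2^r)}$, hence $N\le(\log H)^{C2^r}$ for a suitable absolute $C$ once $H$ is large; for bounded $H$ the estimate is trivial after enlarging $C$.

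The main obstacle is the one addressed in the last two paragraphs: the solution set $A$ has no a priori bound on its multiplicative doubling, so Theorem~\ref{thm:energyrationalcase} cannot be used on it directly. The way around this is to apply the theorem instead to the dilated quotient set $G=\frac{c_1}{c_2}\cdot\frac SS$ — which, being a dilate of a rank-$r$ box, has multiplicative doubling at most $2^r$ regardless of the primes occurring in $c_1$ and $c_2$ (this dilation-invariance is exactly what keeps the exponent equal to $2^r$ rather than $2^{r+O(1)}$) — and then to descend to $A+1\subseteq G+1$ using the subset-stability of $\Lambda_k$, a feature with no analogue for bare multiplicative energies. Everything else (the prime-support bookkeeping for $G$ and $V^{(k)}$, and the final choice of $k$) is routine.
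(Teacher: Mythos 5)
Your proof is correct and follows essentially the same route as the paper's: apply Theorem~\ref{thm:energyrationalcase} to an ambient generalised geometric progression of multiplicative doubling at most $2^r$, use Corollary~\ref{corr:stability} to pass to the solution set, observe that the $k$-fold product set of the shifted solution set lies in a box of size at most $(2kH+1)^r$, and optimise over $k\asymp\log H$. The only cosmetic difference is that you parametrize solutions by the ratio $a=c_1s_1/(c_2s_2)$ so that the shift is literally $+1$, whereas the paper parametrizes by $s_1$ and works with $c_1S_1-1\subseteq -c_2S$, invoking the dilation/sign invariance of $\Lambda_k$ implicitly.
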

\begin{proof}
Let $S_1$ be the set of $s_1 \in S$ such that $(s_1, s_2)$ is a solution to (\ref{eq:uniteq}) for some $s_2 \in S$. By the  hypothesis $S_1$ is contained in the generalised geometric progression 
$$
G := \{ p^{i_1}_1 \ldots p^{i_r}_r:\,i_1, \ldots i_r \in [-H, H] \}.
$$
It is straightforward to check that 
$$
|GG| \leq 2^r|G|,
$$
and thus by Corollary~\ref{corr:stability} and Theorem~\ref{thm:energyrationalcase} applied for the set $c_1S_1$
\begin{equation} \label{eq:s1energybound}
E_k(c_1S_1-1) \leq \exp(Ck \log k 2^r)|S_1|^k.
\end{equation}
On the other hand, the shifted set $c_1S_1 - 1$ is contained in $-c_2G$, so
\begin{equation}  \label{eq:s1sizebound}
|(c_1S_1-1)^{(k)}| \leq |G^{(k)}| < \exp(r\log(3kH)).
\end{equation}
Applying Cauchy-Schwarz to (\ref{eq:s1energybound}) and combining with (\ref{eq:s1sizebound}) we have
$$
|S_1| < \exp(C \log k 2^r + r\log(3kH)/k).
$$
Taking 
$$
k = \frac{\log H}{ \log \log H}
$$
and assuming wlog $H \gg r \gg 1$, we get 
$$
|S_1| \leq \exp(C'2^{r} \log \log H) = (\log H)^{C'2^{r}} 
$$
for some absolute $C' > 0$.
\end{proof}

\section{Acknowledgements}
Oliver Roche-Newton was partially supported by the Austrian Science Fund FWF Projects F5509 and P 30405-N32. Dmitrii Zhelezov was partially supported by the Knuth and Alice Wallenberg Foundation Program for Mathematics 2017. 

We thank Brendan Murphy and Endre Szemer\'edi for helpful conversations.

\end{document}